\documentclass[a4paper,11pt]{amsart}

\usepackage[margin=1.2in,includehead]{geometry}
\usepackage[OT4]{fontenc}
\usepackage{amsmath,amsthm,amssymb}
\usepackage{enumitem}
\usepackage{graphicx}
\usepackage[pdftitle={New bounds on the maximum number of edges in k-quasi-planar graphs},
            pdfauthor={Andrew Suk and Bartosz Walczak},
            colorlinks=true,linkcolor=black,citecolor=black,filecolor=black,urlcolor=black]{hyperref}

\newtheorem{theorem}{Theorem}
\newtheorem{lemma}[theorem]{Lemma}
\newtheorem{corollary}[theorem]{Corollary}
\newtheorem{claim}[theorem]{Claim}

\renewenvironment{enumerate}{\begin{enumorig}[label=\textup{(\roman*)}, noitemsep, topsep=3pt plus 3pt, labelindent=1em, leftmargin=*, widest=]}{\end{enumorig}}

\renewenvironment{itemize}{\begin{itemorig}[label=\textbullet, noitemsep, topsep=3pt plus 3pt, labelindent=1em, leftmargin=*]}{\end{itemorig}}

\DeclareMathOperator{\up}{up}
\def\iter#1{\smash[t]{(}#1\smash[t]{)}}

\let\leq\leqslant
\let\geq\geqslant
\let\setminus\smallsetminus

\makeatletter
\let\old@setaddresses\@setaddresses
\def\@setaddresses{\bigskip\bgroup\parindent 0pt\let\scshape\relax\old@setaddresses\egroup}
\makeatother

\linespread{1.15}

\title[New bounds on the maximum number of edges in $k$-quasi-planar graphs]{\boldmath New bounds on the maximum number of edges in~$k$-quasi-planar graphs}

\author{Andrew Suk\and Bartosz Walczak}

\address[Andrew Suk]{University of Illinois at Chicago, Chicago, IL, USA}
\email{suk@uic.edu}

\address[Bartosz Walczak]{Theoretical Computer Science Department, Faculty of Mathematics and Computer Science, Jagiellonian University, Krak\'ow, Poland}
\email{walczak@tcs.uj.edu.pl}

\thanks{A journal version of this paper appeared in {\em Comput.\ Geom.}, 50:24--33, 2015.}
\thanks{A preliminary version of this paper appeared in Stephen Wismath and Alexander Wolff, editors, {\em Graph Drawing (GD 2013)}, volume 8242 of {\em Lecture Notes Comput.\ Sci.}, pages 95--106.\ Springer, Berlin, 2013.}

\thanks{Andrew Suk was supported by an NSF Postdoctoral Fellowship and by Swiss National Science Foundation grant 200021-125287/1.
Bartosz Walczak was supported by Swiss National Science Foundation grant 200020-144531 and by Ministry of Science and Higher Education of Poland grant 884/N-ESF-EuroGIGA/10/2011/0 within ESF EuroGIGA project GraDR\@.}

\begin{document}

\begin{abstract}
A topological graph is \emph{$k$-quasi-planar} if it does not contain $k$ pairwise crossing edges.  A 20-year-old conjecture asserts that for every fixed $k$, the maximum number of edges in a $k$-quasi-planar graph on $n$ vertices is $O(n)$.  Fox and Pach showed that every $k$-quasi-planar graph with $n$ vertices has at most $n(\log n)^{O(\log k)}$ edges.  We improve this upper bound to $2^{\alpha(n)^c}n\log n$, where $\alpha(n)$ denotes the inverse Ackermann function and $c$ depends only on $k$, for $k$-quasi-planar graphs in which any two edges intersect in a bounded number of points.  We also show that every $k$-quasi-planar graph with $n$ vertices in which any two edges have at most one point in common has at most $O(n\log n)$ edges.  This improves the previously known upper bound of $2^{\alpha(n)^c}n\log n$ obtained by Fox, Pach, and Suk.
\end{abstract}

\maketitle

\section{Introduction}

A \emph{topological graph} is a graph drawn in the plane so that its vertices are represented by points and its edges are represented by curves connecting the corresponding points.  The curves are always \emph{simple}, that is, they do not have self-intersections.  The curves are allowed to intersect each other, but they cannot pass through vertices except for their endpoints.  Furthermore, the edges are not allowed to have tangencies, that is, if two edges share an interior point, then they must properly cross at that point.  We only consider graphs without parallel edges or loops.  Two edges of a topological graph \emph{cross} if their interiors share a point.  A topological graph is \emph{simple} if any two of its edges have at most one point in common, which can be either a common endpoint or a crossing.

It follows from Euler's polyhedral formula that every topological graph on $n\geq 3$ vertices and with no two crossing edges has at most $3n-6$ edges.  A graph is called \emph{$k$-quasi-planar} if it can be drawn as a topological graph with no $k$ pairwise crossing edges.  Hence, a graph is $2$-quasi-planar if and only if it is planar.  According to a conjecture of Pach, Shahrokhi, and Szegedy \cite{PSS96} (see also \cite[Problem 1 in Section 9.6]{BMP-book}), for any fixed $k\geq 2$ there exists a constant $c_k$ such that every $k$-quasi-planar graph on $n$ vertices has at most $c_kn$ edges.  Agarwal, Aronov, Pach, Pollack, and Sharir \cite{AAP+97} were the first to prove this conjecture for \emph{simple} $3$-quasi-planar graphs.  Later, Pach, Radoi\v{c}i\'c, and T\'oth \cite{PRT06} generalized the result to \emph{all} $3$-quasi-planar graphs.  Ackerman \cite{Ack09} proved the conjecture for $k=4$.

For larger values of $k$, several authors have proved upper bounds on the maximum number of edges in $k$-quasi-planar graphs under various conditions on how the edges are drawn.  These include but are not limited to \cite{CaP92,FoP12,FPS13,PSS96,Val98}.  Fox and Pach \cite{FoP12} showed that every $k$-quasi-planar graph with $n$ vertices and no pair of edges intersecting in more than $t$ points has at most $n(c_t\frac{\log n}{\log k})^{c\log k}$ edges, where $c_t$ depends only on $t$ and $c$ is an absolute constant.  Recently, Fox and Pach \cite{FoP14} generalized this result, proving that every $k$-quasi-planar graph with $n$ vertices and without any restriction on the number of intersections between two edges has at most $n(\log n)^{c\log k}$ edges, where $c$ is an absolute constant.  In this paper, we improve the exponent of the polylogarithmic factor in the former bound from $O(\log k)$ to $1+o(1)$ for fixed~$t$.

\begin{theorem}
\label{tint}
Every\/ $k$-quasi-planar graph with\/ $n$ vertices and no pair of edges intersecting in more than\/ $t$ points has at most\/ $2^{\alpha(n)^c}n\log n$ edges, where\/ $\alpha(n)$ denotes the inverse of the Ackermann function, and\/ $c$ depends only on\/ $k$ and\/~$t$.
\end{theorem}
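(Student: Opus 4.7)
The strategy is to reduce the $t$-intersecting case to the simple case (every two edges meet in at most one point), where the abstract announces an $O(n\log n)$ bound for $k$-quasi-planar graphs; the extra factor $2^{\alpha(n)^c}$ will be the cost of that reduction. This is strongly suggested by the clean factorization of the target bound as $2^{\alpha(n)^c}\cdot n\log n$, so I would first prove the simple-case theorem as a free-standing result and then spend the bulk of the work on the reduction.

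To reduce, I would chop each edge $e$ of $G$ at carefully chosen crossing points to obtain a collection of shorter arcs, then redirect each arc to its own pair of endpoints (introducing dummy vertices at the cuts if needed) to build a new topological graph $G'$ that is \emph{simple} and still $K$-quasi-planar for some $K=K(k,t)$. The control on the number of pieces per edge is where Davenport--Schinzel sequences enter: reading along a fixed edge $e$ the sequence of labels of the other edges it crosses, each label repeats at most $t$ times, while the $k$-quasi-planar property forbids the long alternation pattern that $k$ pairwise crossing edges would induce on $e$. This forces the crossing sequence along $e$ to be a generalized DS-sequence of bounded order, whose length is at most $n\cdot 2^{\alpha(n)^{O(1)}}$, with the exponent depending only on $k$ and $t$. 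Summing over edges and applying the simple-case bound to $G'$ yields $|E(G)|\leq 2^{\alpha(n)^c}\,n\log n$.

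The main obstacle will be executing this chopping/rerouting so that all three desiderata hold simultaneously: (a) the number of pieces per edge is genuinely bounded by the DS-length above (the pattern forbidden on the crossing sequence must really be the one ruled out by quasi-planarity, which requires a topological/convex-position argument to convert ``many pairwise crossings along $e$'' into the abstract forbidden subword); (b) the redrawn graph $G'$ remains $K$-quasi-planar for some $K$ depending only on $k,t$, i.e. no $K$ pieces are pairwise crossing in the new drawing; and (c) the rerouting can be carried out without creating tangencies or new crossings so that $G'$ is a legitimate simple topological graph on a controlled number of vertices. Items (b) and (c) together are the delicate part, since naive subdivision blows up vertex counts and can spoil the quasi-planarity constant; the DS-sequence argument must be sharp enough to bound pieces per edge rather than per pair, and the rerouting needs to be chosen locally so that each new arc inherits the crossing structure of its parent without borrowing crossings from elsewhere.
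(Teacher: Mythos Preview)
Your plan diverges substantially from the paper's argument, and the reduction you sketch has a genuine gap.

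The core problem is the claim that reading the labels of crossings along a fixed edge $e$ yields a generalized Davenport--Schinzel sequence whose forbidden pattern is forced by $k$-quasi-planarity. Seeing an alternation such as $a_1a_2\cdots a_ka_1a_2\cdots a_k$ along $e$ tells you only that each $a_i$ crosses $e$ at least twice; it does \emph{not} imply that $a_1,\ldots,a_k$ pairwise cross one another. Two curves can interleave their intersection points with $e$ many times without ever meeting each other. So there is no direct way to convert a long forbidden word in the raw crossing sequence into $k$ pairwise crossing edges, and your step~(a) fails as stated. The paper spends all of Section~4 (the fan/grounding lemmas) and Lemma~\ref{key} on exactly this difficulty, and even then it does not work with the raw crossing sequence: it first reduces (Lemma~\ref{reduction}) to a graph with a distinguished edge $e_0$ meeting every other edge, then records for each edge only a single ``main'' intersection point with $e_0$ and labels it by an \emph{endpoint} of the edge, obtaining sequences $S_1,S_2$ over an $n$-letter alphabet. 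Showing that an $\up(2l,m)$ pattern in $S_1$ forces $k$ pairwise crossings is the heart of the argument and requires the well-grounded/one-sided machinery; nothing like it follows from staring at crossings along a single edge.

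There is a second structural issue. Even granting your DS bound, chopping each edge into up to $2^{\alpha(n)^{O(1)}}n$ pieces and adding dummy vertices produces a simple topological graph $G'$ whose vertex count $n'$ is on the order of $|E(G)|\cdot 2^{\alpha(n)^{O(1)}}$, not $n$. Applying an $O(n'\log n')$ bound to $G'$ then gives you an inequality for the number of \emph{pieces}, not a useful inequality for $|E(G)|$ in terms of $n$. The factorization $2^{\alpha(n)^c}\cdot n\log n$ in the target is not evidence of a ``reduce to simple, pay $2^{\alpha(n)^c}$'' scheme; in the paper the $\log n$ comes from Lemma~\ref{reduction} and the $2^{\alpha(n)^c}$ comes from Klazar's bound (Theorem~\ref{klazar}) applied to the sequences $S_1,S_2$, entirely within the $t$-intersecting setting. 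Theorem~\ref{simple} is proved separately by a different method (a chromatic-number result of Laso\'n--Micek--Pawlik--Walczak) and is not used in the proof of Theorem~\ref{tint}.
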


Recall that the \emph{Ackermann function} $A(n)$ is defined as follows.  Let $A_1(n)=2n$, and $A_k(n)=A_{k-1}(A_k(n-1))$ for $k\geq 2$.  In particular, we have $A_2(n)=2^n$, and $A_3(n)$ is an exponential tower of $n$ twos.  Now, let $A(n)=A_n(n)$, and let $\alpha(n)$ be defined as $\alpha(n)=\min\{k\geq 1\colon A(k)\geq n\}$.  This function grows much slower than the inverse of any primitive recursive function.

For \emph{simple} topological graphs, Fox, Pach, and Suk \cite{FPS13} showed that every $k$-quasi-planar simple topological graph on $n$ vertices has at most $2^{\alpha(n)^c}n\log n$ edges, where $c$ depends only on~$k$.  We establish the following improvement.

\begin{theorem}
\label{simple}
Every\/ $k$-quasi-planar simple topological graph on\/ $n$ vertices has at most\/ $c_kn\log n$ edges, where\/ $c_k$ depends only on\/~$k$.
\end{theorem}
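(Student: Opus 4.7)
My plan is to prove Theorem~\ref{simple} by induction on $k$, with base case $k=4$ supplied by Ackerman's linear bound for simple $4$-quasi-planar graphs. Let $G$ be a simple $k$-quasi-planar topological graph on $n$ vertices with $m$ edges, and assume the theorem for $k-1$ with constant $c_{k-1}$.

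The first strategy I would try is divide-and-conquer in the plane. Choose a pivot edge $e_0 \in G$ such that the Jordan curve obtained from $e_0$ by a short arc near each endpoint partitions the plane into two regions containing at most $2n/3$ vertices each. Let $S$ be the set of edges of $G$ that cross this curve. Since every edge of $S$ crosses $e_0$ and $G$ is $k$-quasi-planar, $S$ is itself $(k-1)$-quasi-planar and simple; moreover, simplicity of $G$ implies that each edge of $S$ meets $e_0$ at a single point, so the crossings are totally ordered along $e_0$. By the inductive hypothesis applied to $S$, we get $|S| \leq c_{k-1}n\log n$. Removing $S \cup \{e_0\}$ leaves two simple $k$-quasi-planar graphs on $n_1,n_2 \leq 2n/3$ vertices, yielding the recurrence
\[
f(n) \leq f(n_1) + f(n_2) + c_{k-1} n \log n, \quad n_1+n_2 \leq n+2, \quad n_1, n_2 \leq 2n/3,
\]
which solves only to $f(n) = O(n\log^2 n)$---one logarithm too many.

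To shave the extra logarithm, I would reorganize the argument as induction on $n$ with $k$ fixed, and prove instead the stronger auxiliary statement that every simple $k$-quasi-planar graph on $n$ vertices has a vertex of degree at most $c'_k\log n$. Removing such a vertex and inducting on $n$ immediately yields $m \leq c_k n\log n$. The minimum-degree statement would itself be established by a nested induction on $k$: given a vertex $v$ of degree exceeding $c'_k\log n$, select an incident edge $e=vu$ whose crossing set $C(e)$, equipped with the linear order on crossings along $e$ that simplicity affords, inherits a minimum-degree condition $\geq c'_{k-1}\log|V(C(e))|$; the inductive hypothesis then produces $k-1$ pairwise crossing edges in $C(e)$, which together with $e$ give $k$ pairwise crossing edges, contradicting $k$-quasi-planarity.

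The main obstacle is precisely the nested inductive step of the minimum-degree lemma: propagating the high-degree condition from $G$ to the subgraph $C(e)$, while simultaneously ensuring that $|V(C(e))|$ remains a positive fraction of $n$ so that $\log|V(C(e))|$ is comparable to $\log n$. This is exactly where simplicity enters essentially and where Davenport--Schinzel-type losses must be avoided: the single-crossing property forces each crossing edge to be indexed by a well-defined point along $e$, and combined with the cyclic rotation at $v$ this supplies enough combinatorial structure for a pigeonhole/Erd\H{o}s--Szekeres extraction rather than a lower-envelope bound. Carrying out this extraction cleanly, uniformly in $k$, is in my view the crux.
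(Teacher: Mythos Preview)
Your proposal identifies a plausible-sounding strategy but leaves the decisive step unproved, and I do not see how to fill it. The heart of your plan is the claim that if every vertex of a simple $k$-quasi-planar graph $G$ has degree exceeding $c'_k\log n$, then some edge $e$ has a crossing set $C(e)$ which, viewed as a simple $(k-1)$-quasi-planar topological graph on its own vertex set $V(C(e))$, again has minimum degree exceeding $c'_{k-1}\log|V(C(e))|$. You correctly flag this as the crux, but high minimum degree in $G$ gives no a priori control over the degrees \emph{within} $C(e)$: a vertex $w\in V(C(e))$ may have only one edge crossing $e$ even though $w$ has $c'_k\log n$ edges in $G$ overall. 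The ``pigeonhole/Erd\H{o}s--Szekeres extraction'' you allude to, exploiting the linear order of crossings along $e$ and the rotation at $v$, is never spelled out, and I do not know an argument that carries it through; absent one, this is a genuine gap rather than a routine detail. (Your abandoned first approach has a separate unaddressed issue: nothing guarantees that a single edge of $G$, closed up into a Jordan curve, yields a balanced separator of the vertex set.)

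The paper's proof takes an entirely different route. Via Lemma~\ref{reduction} it reduces to showing a \emph{linear} bound for simple $k$-quasi-planar graphs containing an edge $\gamma$ that crosses every other edge; the $\log n$ factor arises solely from this reduction. In the reduced situation, after trimming the edges slightly at their endpoints, one obtains a simple $K_{k-1}$-free family of curves each crossing a fixed segment at exactly one point. The paper invokes a recent chromatic-number result of Laso\'n, Micek, Pawlik, and Walczak (Theorem~\ref{thm:lason-et-al}, extended from a line to a segment in Theorem~\ref{thm:curves}): such a family has chromatic number at most some constant $b_{k-1}$. Hence one color class is an independent set of size at least $(|E(G)|-2n)/b_{k-1}$; being pairwise non-crossing, it forms a planar subgraph and so has at most $3n$ edges. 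This yields $|E(G)|<(3b_{k-1}+2)n$ in the reduced case. There is no induction on $k$ at the level of Theorem~\ref{simple}; whatever induction occurs is buried inside the chromatic-number bound, and \emph{that} is where simplicity of the family is used in an essential way.
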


We start the proofs of both theorems with a reduction to the case of topological graphs containing an edge that intersects every other edge.  This reduction introduces the $O(\log n)$ factor for the bound on the number of edges.  Then, the proof of Theorem \ref{tint} follows the approaches of Valtr \cite{Val98} and Fox, Pach, and Suk \cite{FPS13}, using a result on generalized Davenport-Schinzel sequences, which we recall in Section~\ref{ds}.  Although the proofs in \cite{Val98} and \cite{FPS13} heavily depend on the assumption that any two edges have at most one point in common, we are able to remove this condition by establishing some technical lemmas in Section~\ref{intpat}.  In Section \ref{pftint}, we finish the proof of Theorem~\ref{tint}.  The proof of Theorem \ref{simple}, which relies on a recent coloring result due to Laso\'n, Micek, Pawlik, and Walczak \cite{LMPW14}, is given in Section~\ref{bart}.

\section{Initial reduction}

We call a collection $C$ of curves in the plane \emph{decomposable} if there is a partition $C=C_1\cup\cdots\cup C_w$ such that each $C_i$ contains a curve intersecting all other curves in $C_i$, and for $i\neq j$, no curve in $C_i$ crosses nor shares an endpoint with a curve in~$C_j$.

\begin{lemma}[Fox, Pach, Suk {\cite[Lemma 3.2]{FPS13}}]
\label{decompose}
There is an absolute constant\/ $c>0$ such that every collection\/ $C$ of\/ $m\geq 2$ curves such that any two of them intersect in at most\/ $t$ points has a decomposable subcollection of size at least\/ $\frac{cm}{t\log m}$.
\end{lemma}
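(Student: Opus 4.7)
The plan is to run a greedy extraction on the intersection graph $G=(C,E)$, whose vertices are the curves of $C$ and whose edges join pairs of curves that cross or share an endpoint. A decomposable subcollection of $C$ corresponds exactly to an induced subgraph of $G$ that is a vertex-disjoint union of subgraphs, each having a universal vertex and separated from the others by missing edges. I would maintain a remaining set $R \subseteq C$, initially $R = C$, and at each iteration pick a curve $\gamma \in R$, form the part $C_i := N_R[\gamma]$ (closed neighborhood in $G[R]$), and delete the closed $2$-neighborhood $N^2_R[\gamma]$ from $R$. Deleting $N^2_R[\gamma]$ guarantees that no curve remaining in $R$ is adjacent in $G$ to any curve of $C_i$, so the separation condition for the final $C^* = \bigcup_i C_i$ is automatically preserved. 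To reach $|C^*| \geq cm/(t\log m)$ it suffices to pick $\gamma$ at every step so that the amortized yield $|N_R[\gamma]|/|N^2_R[\gamma]|$ is at least $\Omega(1/(t\log m))$; summing over iterations then produces the claimed bound.

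To find such a $\gamma$ at each step, I would use a dichotomy based on the maximum degree $\Delta$ of $G[R]$. In the high-degree regime $\Delta \geq |R|/(t\log m)$, a maximum-degree vertex $\gamma$ alone yields a single part $C_i$ of size at least $|R|/(t\log m)$ in one iteration, which already meets the bound. In the low-degree regime, a naive degree-based independent-set bound (Caro--Wei) falls polynomially short of the target, and the extra leverage must come from the $t$-bounded intersection hypothesis. Concretely, the total number of intersection points in $R$ is at most $t\binom{|R|}{2}$; double-counting triples of curves $(u,v,w)$ with $v$ adjacent in $G$ to both $u$ and $w$ relates $\sum_{v \in R} |N^2_R[v]|$ to $\sum_{v \in R} |N_R[v]|^2$, and a Cauchy--Schwarz-type averaging argument should produce a vertex $\gamma$ whose closed $2$-neighborhood is small relative to its closed $1$-neighborhood, with the factor $t$ entering when converting between the number of crossing pairs and the number of intersection points.

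The hardest part of this plan will be extracting the precise logarithmic factor in the low-degree regime. I expect that a single averaging step will not suffice, and that one must iterate across $O(\log m)$ scales in which the maximum degree of the residual graph is successively halved, with each scale contributing an additive $\Omega(m/(t\log m))$ to the cumulative yield through a telescoping sum. A plausible backup plan uses the string-graph separator theorem of Fox and Pach: every string graph on $m'$ vertices admits a balanced separator of size $O(\sqrt{m'\log m'})$, and recursing on the two resulting pieces yields a recurrence of the form $f(m) \geq 2f(\lceil 2m/3 \rceil) - O(\sqrt{mt \log m})$ that solves to $f(m) = \Omega(m/(t\log m))$ after accounting for the multiplicities introduced by $t$. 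Either route should establish the lemma with an absolute constant $c>0$.
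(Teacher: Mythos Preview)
The paper does not actually prove this lemma; it quotes it from Fox, Pach, and Suk \cite{FPS13} and uses it as a black box. So there is no ``paper's own proof'' to compare against here. That said, let me comment on your proposal.

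Your primary plan---the greedy extraction with the ratio $|N_R[\gamma]|/|N^2_R[\gamma]|$---does not use the geometry in any essential way; everything you wrote applies to an abstract graph. But the conclusion is false for abstract graphs: in a random $d$-regular graph on $m$ vertices the ratio is $\Theta(1/d)$ at every vertex, and no union of vertex-disjoint ``stars with a universal vertex and no outgoing edges'' can have more than $O(m/d)$ vertices, which is far below $m/\log m$ for large $d$. The vague Cauchy--Schwarz/double-counting step does not convert the $t$-intersection hypothesis into anything usable at the graph level, because the only graph-theoretic consequence of the hypothesis is that the number of crossing \emph{points} is at most $t$ times the number of crossing \emph{pairs}, and that quantity never enters your counting. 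Some genuine planar input is needed.

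Your backup plan is much closer to what \cite{FPS13} actually does, but two details are off. First, the separator theorem you quote is false as stated: $K_m$ is a string graph (realized by $m$ segments in general position) and has no balanced separator of size $o(m)$. The correct Fox--Pach separator is in terms of the total number of \emph{crossing points} (equivalently, for string graphs, in terms of the number of \emph{edges} of the intersection graph): a family of curves with $x$ crossings has a separator of size $O(\sqrt{x})$. To exploit this one first runs a max-degree dichotomy: if some curve meets at least $\epsilon m$ others, that closed neighborhood is already a single decomposable part; otherwise the intersection graph has at most $\epsilon m^2/2$ edges, hence at most $t\epsilon m^2/2$ crossings, hence a separator of size $O(m\sqrt{t\epsilon})$, which is a small fraction of $m$ once $\epsilon$ is chosen proportional to $1/t$. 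One then recurses on the two sides and sums the gains over the $O(\log m)$ levels of the recursion; this is where the $1/\log m$ factor enters. Second, your recurrence $f(m)\geq 2f(\lceil 2m/3\rceil)-O(\sqrt{mt\log m})$ cannot be right as written (the two pieces cannot both have size $2m/3$), and the separator size $O(\sqrt{mt\log m})$ is not what any known theorem gives; the correct recursion is $f(m)\geq f(m_1)+f(m_2)$ with $m_1+m_2=m-|S|$, $m_i\leq 2m/3$, and $|S|=O(\sqrt{t\,|E|})$ as above.
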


In the proofs of both Theorem \ref{tint} and Theorem \ref{simple}, we establish a (near) linear upper bound on the number of edges under the additional assumption that the graph has an edge intersecting every other edge.  Once this is achieved, we use the following lemma to infer an upper bound for the general case.

\begin{lemma}[implicit in \cite{FPS13}]
\label{reduction}
Let\/ $G$ be a topological graph on\/ $n$ vertices such that no two edges have more than\/ $t$ points in common.  Suppose that for some constant\/ $\beta$, every subgraph\/ $G'$ of\/ $G$ containing an edge that intersects every other edge of\/ $G'$ has at most\/ $\beta|V(G')|$ edges.  Then\/ $G$ has at most\/ $c_t\beta n\log n$ edges, where\/ $c_t$ depends only on\/~$t$.
\end{lemma}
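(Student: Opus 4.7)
The plan is to apply Lemma \ref{decompose} a single time to the full edge set of $G$, viewed as a collection of $m=|E(G)|$ curves with pairwise intersections of size at most $t$. I may assume $m\geq 2$, since the conclusion is trivial otherwise. Lemma \ref{decompose} then produces a decomposable subcollection $C=C_1\cup\cdots\cup C_w$ with $|C|\geq cm/(t\log m)$, where $c$ is the absolute constant from that lemma.

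For each part $C_i$, let $V_i$ be the set of endpoints of the curves in $C_i$, and let $G_i$ be the topological subgraph of $G$ with vertex set $V_i$ and edge set $C_i$. By the definition of decomposability, $G_i$ contains an edge that intersects every other edge of $G_i$, so the hypothesis of the lemma applies to $G_i$ and yields $|C_i|\leq\beta|V_i|$. The second clause of decomposability says that no curve in $C_i$ shares an endpoint with any curve in $C_j$ for $i\neq j$; hence the sets $V_1,\ldots,V_w$ are pairwise disjoint and $\sum_{i=1}^w|V_i|\leq n$.

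Combining these bounds gives
\[
\frac{cm}{t\log m}\leq|C|=\sum_{i=1}^w|C_i|\leq\beta\sum_{i=1}^w|V_i|\leq\beta n,
\]
so $m\leq(t\beta/c)\,n\log m$. Using the trivial bound $m\leq\binom{n}{2}$, one has $\log m\leq 2\log n$, and therefore $m\leq(2t/c)\,\beta n\log n$, which is the claimed estimate with $c_t=2t/c$.

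No serious obstacle arises; the proof is a one-shot application of Lemma \ref{decompose} rather than an iterative removal-and-recurse. The one conceptual point to verify is that the word \emph{intersects} is interpreted consistently (crossing or shared endpoint) in both the decomposability definition and the lemma's hypothesis, which is exactly what makes the vertex sets $V_i$ disjoint and drives the final summation.
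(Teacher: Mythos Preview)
Your argument is correct and follows essentially the same route as the paper: a single application of Lemma~\ref{decompose}, the hypothesis applied to each part $C_i$, disjointness of the vertex sets $V_i$ to bound $\sum|V_i|\leq n$, and finally $\log m\leq 2\log n$ from $m\leq\binom{n}{2}$. The paper merely absorbs the factor $t$ into a constant $c'_t$ and is terser about why $\sum|V_i|\leq n$; your explicit justification via the ``no shared endpoint'' clause is a welcome clarification.
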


\begin{proof}
By Lemma \ref{decompose}, there is a decomposable subset $E'\subset E(G)$ such that $|E'|\geq c'_t|E(G)|/\log|E(G)|$, where $c'_t$ depends only on~$t$.  Hence, there is a partition $E'=E_1\cup\cdots\cup E_w$, such that each $E_i$ has an edge $e_i$ that intersects every other edge in $E_i$, and for $i\neq j$, the edges in $E_i$ are disjoint from the edges in~$E_j$.  Let $V_i$ denote the set of vertices that are the endpoints of the edges in $E_i$, and let $n_i=|V_i|$.  By the assumption, we have $|E_i|\leq\beta n_i$ for $1\leq i\leq w$.  Hence,
\begin{equation*}
\frac{c'_t|E(G)|}{\log|E(G)|}\leq|E'|\leq\sum_{i=1}^w\beta n_i\leq\beta n.
\end{equation*}
Since $|E(G)|\leq n^2$, we obtain $|E(G)|\leq 2(c'_t)^{-1}\beta n\log n$.
\end{proof}

\section{Generalized Davenport-Schinzel sequences}\label{ds}

A sequence $S=(s_1,\ldots,s_m)$ is called \emph{$l$-regular} if any $l$ consecutive terms of $S$ are pairwise different.  For integers $l,m\geq 2$, the sequence $S=(s_1,\ldots,s_{lm})$ is said to be of \emph{type\/ $\up(l,m)$} if the first $l$ terms are pairwise different and $s_i=s_{i+l}=\cdots=s_{i+(m-1)l}$ for $1\leq i\leq l$.  In particular, every sequence of type $\up(l,m)$ is $l$-regular.  For convenience, we will index the elements of an $\up(l,m)$ sequence as
\begin{equation*}
S=(s_{1,1},\ldots,s_{l,1},\;s_{1,2},\ldots,s_{l,2},\;\ldots,\;s_{1,m},\ldots,s_{l,m}),
\end{equation*}
where $s_{1,1},\ldots,s_{l,1}$ are pairwise different and $s_{i,1}=\cdots=s_{i,m}$ for $1\leq i\leq l$.

\begin{theorem}[\cite{Kla92}, see also (18) in \cite{Kla02}]
\label{klazar}
For\/ $l\geq 2$ and\/ $m\geq 3$, every\/ $l$-regular sequence over an\/ $n$-element alphabet that does not contain a subsequence of type\/ $\up(l,m)$ has length at most
\begin{equation*}
n\cdot l\cdot 2^{(lm-3)}\cdot(10l)^{10\alpha(n)^{lm}}.
\end{equation*}
\end{theorem}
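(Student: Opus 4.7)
The plan is to establish the bound by induction on the repetition parameter $m$, with a nested recursion on the alphabet size $n$ producing the Ackermann-type dependence; the approach mirrors the classical Davenport--Schinzel analysis of Hart--Sharir and its generalization in Klazar's earlier work. Write $F_{l,m}(n)$ for the maximum length of an $l$-regular sequence over an $n$-element alphabet that avoids $\up(l,m)$. The base case is $m=2$: a subsequence of type $\up(l,2)$ consists of $l$ distinct symbols followed by the same $l$ symbols in the same order, and an $l$-regular sequence avoiding such a pattern has length linear in $n$ (with a polynomial dependence on $l$) by an elementary counting argument.

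For the inductive step with $m\geq 3$, I would partition a given sequence $S$ into $q$ consecutive blocks $B_1,\ldots,B_q$ of a common length $L$ to be chosen later. Call a symbol \emph{fresh} in block $B_i$ if $B_i$ contains its first occurrence in $S$; there are at most $n$ fresh-symbol positions across all blocks. Form a compressed sequence $\tilde S$ by listing, block by block, the distinct non-fresh symbols of $B_i$ whose next occurrence lies strictly after $B_i$, in the order in which they first appear in $B_i$. The key combinatorial claim is that $\tilde S$ is $l$-regular and avoids $\up(l,m-1)$: otherwise the later occurrences guaranteed by the compression rule would extend any forbidden subpattern of $\tilde S$ to a $\up(l,m)$ pattern in $S$. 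The inductive hypothesis on $m$ then bounds $|\tilde S|$, while the remaining positions inside individual blocks contribute at most $q\cdot F_{l,m}(L)$, using that within one block the effective alphabet has size at most $L$. Choosing $L$ so that each recursive call consumes exactly one level of the Ackermann hierarchy, the resulting recurrence unwinds by the standard inverse-Ackermann calculation to the claimed bound $n\cdot l\cdot 2^{lm-3}\cdot(10l)^{10\alpha(n)^{lm}}$.

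The main obstacle will be the quantitative bookkeeping rather than the overall scheme. One must align the block length $L$, the parameter of the nested recursion, and the loss in the pattern-lifting step so that no more than a bounded multiplicative factor is accrued per inductive level; this is what determines the exponent constant $10$ and the factor $2^{lm-3}$ in the final expression. A secondary technical subtlety is verifying that $\tilde S$ is genuinely $l$-regular across block boundaries, which relies on the fact that the compression records only distinct symbols per block, so any short repetition in $\tilde S$ would pull back to a short repetition already present in $S$ and contradict $l$-regularity of the original sequence.
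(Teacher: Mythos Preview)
The paper does not prove this theorem at all: it is quoted as Klazar's result \cite{Kla92} and used as a black box in Section~\ref{pftint}. There is therefore no ``paper's own proof'' to compare your proposal against.

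Your sketch is a reasonable outline of the known approach from the Davenport--Schinzel literature (Hart--Sharir style block decomposition with a nested recursion on $m$ and $n$), and it is broadly in the spirit of Klazar's original argument. That said, as written it is only a plan, not a proof: the compression step, the verification that $\tilde S$ avoids $\up(l,m-1)$, the $l$-regularity across block boundaries, and especially the quantitative optimization of the block length $L$ to hit the precise constants $2^{lm-3}$ and $(10l)^{10\alpha(n)^{lm}}$ are all asserted rather than carried out. For the purposes of this paper none of that is needed---you can simply cite Klazar---but if you intend to supply a self-contained proof you would need to fill in those details, and matching the exact constants in the statement will require following Klazar's bookkeeping fairly closely.
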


\noindent For more results on generalized Davenport-Schinzel sequences, see \cite{Niv10,Pet11a,Pet11b}.

\section{Intersection pattern of curves}\label{intpat}

In this section, we will prove several technical lemmas on the intersection pattern of curves in the plane.  We will always assume that no two curves are tangent, and that if two curves share an interior point, then they must properly cross at that point.

\begin{lemma}
\label{disjoint}
Let\/ $\lambda_1$ and\/ $\lambda_2$ be disjoint simple closed curves.  Let\/ $C$ be a collection of\/ $m$ curves with one endpoint on\/ $\lambda_1$, the other endpoint on\/ $\lambda_2$, and no other common points with\/ $\lambda_1$ or\/~$\lambda_2$.  If no\/ $k$ members of\/ $C$ pairwise cross, then\/ $C$ contains\/ $\lceil m/(k-1)^2\rceil$ pairwise disjoint members.
\end{lemma}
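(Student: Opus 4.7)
The plan is to combine a Tur\'an-style degree argument with a Dilworth-type extraction on the cyclic endpoint orders, naturally producing the bound as a product of two factors of $k-1$.

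First, I would reduce to a canonical region. Because no curve of $C$ meets $\lambda_1$ or $\lambda_2$ other than at its own endpoints, each curve is confined to a single component of $\mathbb{R}^2\setminus(\lambda_1\cup\lambda_2)$, and the curves connecting $\lambda_1$ to $\lambda_2$ must all lie in the unique component having both curves on its boundary. After possibly adjoining the point at infinity (in the side-by-side case, where the two curves bound a pair-of-pants), this region becomes a topological annulus $A$ bounded by $\lambda_1$ and $\lambda_2$.

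Next, since the crossing graph of $C$ is $K_k$-free, Tur\'an's theorem implies that some curve $\gamma_0\in C$ crosses at most $\frac{k-2}{k-1}(m-1)$ others, so there is a subcollection $C'$ of at least $\lceil(m-k+1)/(k-1)\rceil$ curves disjoint from $\gamma_0$. Cutting the annulus $A$ along $\gamma_0$ produces a topological disk $D$, in which each arc of $C'$ joins the two boundary arcs of $\partial D$ inherited from $\lambda_1$ and $\lambda_2$ (now linearly ordered). Comparing these two linear orders on the endpoints of $C'$ defines a permutation whose decreasing subsequences correspond to families of pairwise-interleaved endpoints --- and thus, by the Jordan curve theorem, to pairwise crossing arcs. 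Since the crossing graph on $C'$ is $K_k$-free, Dilworth's theorem yields an increasing subsequence of length at least $|C'|/(k-1)\geq(m-k+1)/(k-1)^2$; together with $\gamma_0$, this gives at least $\lceil m/(k-1)^2\rceil$ curves whose endpoints are pairwise non-interleaved on $\partial D$.

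The main obstacle is passing from \emph{non-interleaved endpoints} to \emph{actual pairwise disjointness}: two simple arcs in $D$ with non-interleaved endpoints are forced to cross an even number of times, so they could still cross twice in bigons (this is the genuine difficulty introduced by dropping the simple-topological-graph assumption). I would address this by a bigon-reduction argument exploiting the $K_k$-free hypothesis on the full arrangement --- each innermost bigon between two selected arcs confines a subarrangement whose size and clique number is controlled, so the no-$k$-pairwise-crossing condition either directly forbids the bigon or allows us to reroute the problem inside it. Making this last topological step rigorous, and confirming the ceiling in the final bound, is where the bulk of the careful technical work lies.
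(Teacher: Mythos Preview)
Your Tur\'an-plus-Dilworth skeleton matches the paper's proof, but you have chosen the wrong relation for the Dilworth step, and this is precisely what manufactures the ``bigon obstacle'' you then struggle with. You order $C'$ by the permutation of endpoints and extract a non-interleaved subfamily; as you correctly note, non-interleaved arcs in a disk may still cross an even number of times. Your proposed fix---a bigon-reduction using the $K_k$-free hypothesis on the full arrangement---is not an argument, and I do not see how to make it one: two selected arcs can form a bigon with nothing inside it, and $K_k$-freeness of the ambient family gives no purchase on that pair. Rerouting one arc through the bigon changes the crossing pattern with \emph{other} curves and can destroy the $K_k$-freeness you need elsewhere.

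The paper sidesteps this entirely by putting \emph{disjointness itself} into the partial order. After cutting the annulus along $\gamma_0$ and listing the remaining curves $a_1,\ldots,a_{|C'|}$ by their $\lambda_1$-endpoints, define $a_i\prec a_j$ iff $i<j$ and $a_i\cap a_j=\emptyset$. The only thing to verify is transitivity, and this is the ``simple geometric observation'': if $a_i\cap a_j=\emptyset$ then $a_i$ separates the disk and $a_j$ lies entirely on one side of it; if also $a_j\cap a_l=\emptyset$ then $a_l$ lies on one side of $a_j$, hence is still separated from $a_i$ by $a_j$, so $a_i\cap a_l=\emptyset$. Thus $\prec$ is a genuine partial order whose antichains are pairwise-crossing families (size at most $k-1$), and Dilworth hands you a chain---i.e.\ a pairwise \emph{disjoint} subfamily---of size $\lceil m/(k-1)^2\rceil$ directly. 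No bigon argument is needed; the difficulty you identified is an artifact of working with interleaving instead of disjointness.
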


\begin{proof}
Let $G$ be the intersection graph of~$C$.  Since $G$ does not contain a clique of size $k$, by Tur\'an's theorem \cite{Tur41}, $|E(G)|\leq(1-1/(k-1))m^2/2$.  Hence, there is a curve $a\in C$ and a subset $S\subset C$, such that $|S|\geq m/(k-1) -1$ and $a$ is disjoint from every curve in~$S$.  We order the elements in $S\cup\{a\}$ as $a_0,a_1,\ldots,a_{|S|}$ in clockwise order as their endpoints appear on $\lambda_1$, starting with $a_0=a$.  Now, we define the partial order $\prec$ on the pairs in $S$ so that $a_i\prec a_j$ if $i<j$ and $a_i$ is disjoint from~$a_j$.  A simple geometric observation shows that $\prec$ is indeed a partial order.  Since $S$ does not contain $k$ pairwise crossing members, by Dilworth's theorem \cite{Dil50}, $S\cup\{a\}$ contains $\lceil m/(k-1)^2\rceil$ pairwise disjoint members.
\end{proof}

A collection of curves with a common endpoint $v$ is called a \emph{fan} with \emph{apex}~$v$.  Let $C=\{a_1,\ldots,a_m\}$ be a fan with apex $v$, and $\gamma=\gamma_1\cup\cdots\cup\gamma_m$ be a curve with endpoints $p$ and $q$ partitioned into $m$ subcurves $\gamma_1,\ldots,\gamma_m$ that appear in this order along $\gamma$ from $p$ to~$q$.  We say that $C$ is \emph{grounded} by $\gamma_1\cup\cdots\cup\gamma_m$ if
\begin{enumerate}
\item $\gamma$ does not contain~$v$,
\item each $a_i$ has its other endpoint on~$\gamma_i$.
\end{enumerate}
We say that $C$ is \emph{well-grounded} by $\gamma_1\cup\cdots\cup\gamma_m$ if $C$ is grounded by $\gamma_1\cup\cdots\cup\gamma_m$ and each $a_i$ intersects $\gamma$ only within~$\gamma_i$.  Note that both notions depend on a particular partition $\gamma=\gamma_1\cup\cdots\cup\gamma_m$.  See Figure \ref{wellgroundfig} for a small example.

\begin{figure}[t]
\begin{center}
\includegraphics[width=150pt]{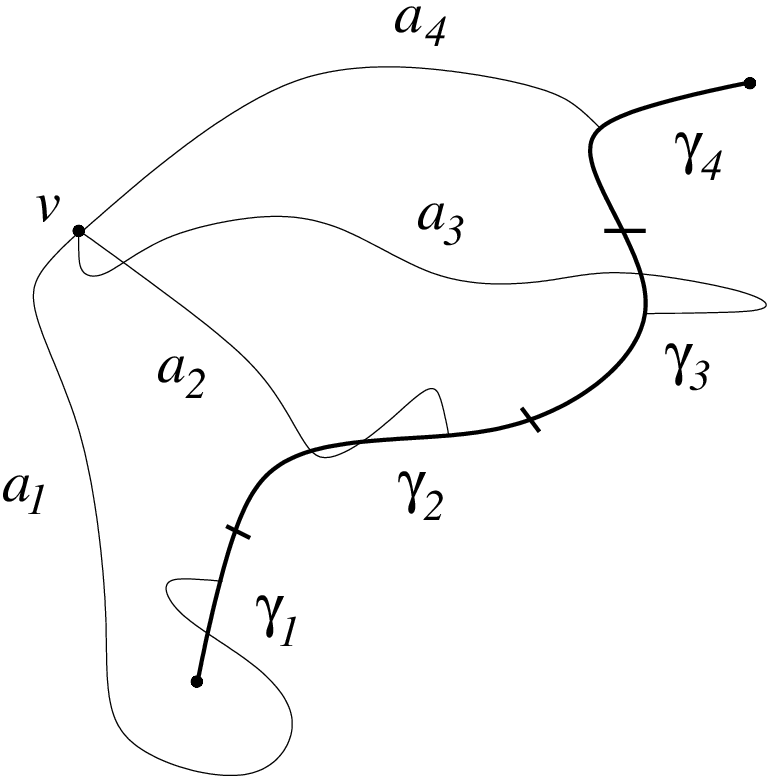}
\caption{Fan $C=\{a_1,a_2,a_3,a_4\}$ is well-grounded by $\gamma=\gamma_1\cup\gamma_2\cup\gamma_3\cup\gamma_4$.}
\label{wellgroundfig}
\end{center}
\end{figure}

\begin{lemma}
\label{twostep}
Let\/ $C=\{a_1,\ldots,a_m\}$ be a fan grounded by a curve\/ $\gamma=\gamma_1\cup\cdots\cup\gamma_m$.  If each\/ $a_i$ intersects\/ $\gamma$ in at most\/ $t$ points, then there is a subfan\/ $C'=\{a_{i_1},\ldots,a_{i_r}\}\subset C$ with\/ $i_1<\cdots<i_r$ and\/ $r=\lfloor\log_{t+1}m\rfloor$ that is grounded by a subcurve\/ $\gamma'=\gamma'_1\cup\cdots\cup\gamma'_r\subset\gamma$.  Moreover,
\begin{enumerate}
\item $\gamma'_j\supset\gamma_{i_j}$ for\/ $1\leq j\leq r$,
\item $a_{i_j}$ intersects\/ $\gamma'$ only within\/ $\gamma'_1\cup\cdots\cup\gamma'_j$ for\/ $1\leq j\leq r$.
\end{enumerate}
\end{lemma}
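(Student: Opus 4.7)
The plan is to prove the lemma by induction on $m$. For $m\le t$ we have $r=0$ and there is nothing to show; for $t+1\le m<(t+1)^2$ we have $r=1$, so one can simply set $i_1=1$, take $\gamma'$ to be any subcurve of $\gamma$ containing $\gamma_1$, and let $\gamma'_1=\gamma'$. For the inductive step ($r\ge 2$) I will always fix $i_1=1$ and extract the remaining $r-1$ indices from the induction hypothesis applied to a carefully chosen contiguous sub-range of the fan.

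The sub-range is chosen by pigeonhole on the set $R_1=\{k:a_1\cap\gamma_k\neq\emptyset\}$. Since $a_1$ meets $\gamma$ in at most $t$ points and $1\in R_1$, the set $K=R_1\setminus\{1\}\subset\{2,\ldots,m\}$ has $|K|\le t-1$ and partitions $\{2,\ldots,m\}$ into at most $t$ consecutive integer sub-intervals of total size at least $m-t$. Some such sub-interval $\{c,\ldots,d\}$ thus has size $m'\ge(m-t)/t$, and a short calculation using $m\ge(t+1)^r$ and $r\ge 2$ (which reduces to $(t+1)^{r-1}\ge t$) gives $m'\ge(t+1)^{r-1}$, so $\lfloor\log_{t+1}m'\rfloor\ge r-1$. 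The collection $\{a_c,\ldots,a_d\}$ is a fan with apex $v$ grounded by the contiguous subcurve $\gamma_c\cup\cdots\cup\gamma_d$, and each member still meets this new grounding in at most $t$ points; the induction hypothesis therefore yields indices $c\le j_1<\cdots<j_{r-1}\le d$, a subcurve $\tilde\gamma\subset\gamma_c\cup\cdots\cup\gamma_d$, and a partition $\tilde\gamma=\tilde\gamma_1\cup\cdots\cup\tilde\gamma_{r-1}$ satisfying the conclusions of the lemma.

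To stitch everything together, set $i_j=j_{j-1}$ for $2\le j\le r$, let $\gamma'$ be the subcurve of $\gamma$ running from the start of $\gamma$ to the far endpoint of $\tilde\gamma$, let $\gamma'_1$ be the portion of $\gamma'$ from the start of $\gamma$ to the starting point of $\tilde\gamma$, and put $\gamma'_j=\tilde\gamma_{j-1}$ for $2\le j\le r$. Condition (i) is immediate: $\gamma'_1\supset\gamma_1=\gamma_{i_1}$ (using $c\ge 2$), and $\gamma'_j\supset\gamma_{i_j}$ for $j\ge 2$ is inherited from the induction. Condition (ii) for $j\ge 2$ is also inherited, since $\gamma'_{j+1}\cup\cdots\cup\gamma'_r=\tilde\gamma_j\cup\cdots\cup\tilde\gamma_{r-1}$. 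The delicate step is condition (ii) for $j=1$: one needs $a_1$ to avoid $\gamma'_2\cup\cdots\cup\gamma'_r=\tilde\gamma\subset\gamma_c\cup\cdots\cup\gamma_d$, and this holds precisely because $\{c,\ldots,d\}$ was chosen disjoint from $K$, so $a_1$ meets no $\gamma_k$ with $c\le k\le d$. The main obstacle is recognizing the correct placement of the first cut: putting it at the start of $\tilde\gamma$ rather than immediately after $\gamma_1$ lets the pieces $\gamma_2,\ldots,\gamma_{c-1}$ that $a_1$ may still touch be absorbed harmlessly into $\gamma'_1$, where intersections with $a_{i_1}$ are permitted.
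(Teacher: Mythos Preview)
Your proof is correct and follows essentially the same strategy as the paper: induct on $m$, always keep $a_1$, use pigeonhole on the (at most $t$) pieces $\gamma_k$ hit by $a_1$ to locate a long contiguous sub-range of indices that $a_1$ misses, recurse there, and then absorb everything up to the start of the recursive subcurve into $\gamma'_1$. The only cosmetic differences are that the paper partitions $\{1,\ldots,m\}$ into $t+1$ equal blocks (one of which must be clean) to get a sub-range of size $\lfloor m/(t+1)\rfloor$ with the exact identity $\lfloor\log_{t+1}\lfloor m/(t+1)\rfloor\rfloor=\lfloor\log_{t+1}m\rfloor-1$, whereas you delete the bad indices from $\{2,\ldots,m\}$ and take the largest gap, getting size at least $(m-t)/t$; your bound is actually slightly sharper but requires the separate treatment of $r=1$ and, when $\lfloor\log_{t+1}m'\rfloor>r-1$, an implicit truncation to the first $r-1$ indices returned by the recursion (which is harmless, since properties (i) and (ii) survive discarding a tail of the subfan and of $\tilde\gamma$).
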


\begin{proof}
We proceed by induction on~$m$.  The base case $m\leq t$ is trivial.  Now, assume that $m\geq t+1$ and the statement holds up to $m-1$.  Since $a_1$ intersects $\gamma$ in at most $t$ points, there exists an integer $j$ such that $a_1$ is disjoint from $\gamma_j\cup\gamma_{j+1}\cup\cdots\cup\gamma_{j+\lfloor m/(t+1)\rfloor-1}$.  By the induction hypothesis applied to $\{a_j,a_{j+1},\ldots,a_{j+\lfloor m/(t+1)\rfloor-1}\}$ and the curve $\gamma_j\cup\gamma_{j+1}\cup\cdots\cup\gamma_{j+\lfloor m/(t+1)\rfloor-1}$, we obtain a subfan $C^{\ast}=\{a_{i_2},\ldots,a_{i_r}\}$ of $r-1=\lfloor\log_{t+1}\lfloor m/(t+1)\rfloor\rfloor=\lfloor\log_{t+1}m\rfloor-1$ curves, and a subcurve $\gamma^{\ast}=\gamma'_2\cup\cdots\cup\gamma'_r\subset\gamma_j\cup\gamma_{j+1}\cup\cdots\cup\gamma_{j+\lfloor m/(t+1)\rfloor-1}$ with the desired properties.  Let $\gamma'_1$ be the subcurve of $\gamma$ obtained by extending the endpoint of $\gamma_1$ to the endpoint of $\gamma^{\ast}$ along $\gamma$ so that $\gamma'_1\supset\gamma_1$.  Set $\gamma'=\gamma'_1\cup\gamma^{\ast}$.  Hence, the collection of curves $C'=\{a_1\}\cup C^{\ast}$ and $\gamma'$ have the desired properties.
\end{proof}

\begin{lemma}
\label{fan}
Let\/ $C=\{a_1,\ldots,a_m\}$ be a fan grounded by a curve\/ $\gamma=\gamma_1\cup\cdots\cup\gamma_m$.  If each\/ $a_i$ intersects\/ $\gamma$ in at most\/ $t$ points, then there is a subfan\/ $C'=\{a_{i_1},\ldots,a_{i_r}\}\subset C$ with\/ $i_1<\cdots<i_r$ and\/ $r=\lfloor\log_{t+1}\log_{t+1}m\rfloor$ that is well-grounded by a subcurve\/ $\gamma'=\gamma_1'\cup\cdots\cup\gamma'_r\subset\gamma$.  Moreover, $\gamma'_j\supset\gamma_{i_j}$ for\/ $1\leq j\leq r$.
\end{lemma}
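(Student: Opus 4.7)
The plan is to apply Lemma \ref{twostep} twice---the second time after reversing the direction of the grounding curve produced by the first application---and combine the two resulting ``one-sided'' non-intersection properties into full well-groundedness. The first application, to $C$ grounded by $\gamma$, yields a subfan $\{a_{i_1},\ldots,a_{i_r}\}$ with $r=\lfloor\log_{t+1}m\rfloor$ and a subcurve $\gamma'=\gamma'_1\cup\cdots\cup\gamma'_r\subset\gamma$ satisfying $\gamma'_j\supset\gamma_{i_j}$ together with the ``no intersection to the right'' condition that each $a_{i_j}$ meets $\gamma'$ only within $\gamma'_1\cup\cdots\cup\gamma'_j$.

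Next I would relabel along the reversed curve: set $b_j=a_{i_{r-j+1}}$ and $\delta_j=\gamma'_{r-j+1}$, so that $\{b_1,\ldots,b_r\}$ is a fan grounded by $\delta=\delta_1\cup\cdots\cup\delta_r$, the curve $\gamma'$ traversed backwards, with every $b_j$ still meeting $\delta$ in at most $t$ points. A second application of Lemma \ref{twostep} then yields a subfan $\{b_{k_1},\ldots,b_{k_s}\}$ with $s=\lfloor\log_{t+1}r\rfloor$, which equals $\lfloor\log_{t+1}\log_{t+1}m\rfloor$ because $t+1$ is an integer (so $\lfloor\log_{t+1}\lfloor x\rfloor\rfloor=\lfloor\log_{t+1}x\rfloor$), and a subcurve $\delta'=\delta'_1\cup\cdots\cup\delta'_s\subset\delta$ satisfying $\delta'_l\supset\delta_{k_l}$ together with the analogous ``no intersection to the right'' condition in the reversed orientation.

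To finish I would translate back to $\gamma$'s orientation by setting $p_l=i_{r-k_{s-l+1}+1}$ and $\gamma''_l=\delta'_{s-l+1}$ for $1\leq l\leq s$. A routine index check gives $p_1<\cdots<p_s$, shows that $\gamma''=\gamma''_1\cup\cdots\cup\gamma''_s$ is a connected subcurve of $\gamma$ with its pieces appearing in the correct order, and gives $\gamma''_l\supset\gamma_{p_l}$. Well-groundedness then separates into two cases: when $l'>l$, the fact that $a_{p_l}$ avoids $\gamma''_{l'}$ comes directly from the first application (the piece $\gamma''_{l'}$ sits inside some $\gamma'_h$ with $h$ larger than the index corresponding to $a_{p_l}$, which by property (ii) of Lemma \ref{twostep} $a_{p_l}$ is forbidden to meet); when $l'<l$, it comes from the second application, which in the reversed orientation says that $b_{k_{s-l+1}}=a_{p_l}$ cannot meet $\delta'_{s-l'+1}=\gamma''_{l'}$.

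The main obstacle is purely notational---chasing indices through the double reversal---and there is no new geometric content beyond Lemma \ref{twostep}. Once the correspondence $p_l=i_{r-k_{s-l+1}+1}$, $\gamma''_l=\delta'_{s-l+1}$ is in place, the two ``right-closed'' conclusions from the two applications dovetail exactly into the desired well-groundedness.
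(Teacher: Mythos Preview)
Your proposal is correct and follows exactly the paper's approach: apply Lemma~\ref{twostep} once, then again with the order reversed, so that the two ``one-sided'' non-intersection conditions combine to give well-groundedness. The only difference is that you spell out the index bookkeeping explicitly (and your phrase ``sits inside some $\gamma'_h$'' should more precisely read ``sits inside $\gamma'_{h+1}\cup\cdots\cup\gamma'_r$'', but the conclusion is unaffected since $\gamma''_l\supset\gamma'_{h_l}$ forces every later piece $\gamma''_{l'}$ entirely to the right of $\gamma'_{h_l}$).
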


\begin{proof}
We apply Lemma \ref{twostep} to $C$ and $\gamma=\gamma_1\cup\cdots\cup\gamma_m$ to obtain a subcollection $C^{\ast}=\{a_{j_1},a_{j_2},\ldots,a_{j_{m^{\ast}}}\}$ of $m^{\ast}=\lfloor\log_{t+1}m\rfloor$ curves, and a subcurve $\gamma^{\ast}=\gamma^{\ast}_1\cup\cdots\cup\gamma^{\ast}_{m^{\ast}}\subset\gamma$ with the properties listed in Lemma~\ref{twostep}.  Then we apply Lemma \ref{twostep} again to $C^{\ast}$ and $\gamma^{\ast}$ with the elements in $C^{\ast}$ in reverse order.  By the second property of Lemma \ref{twostep}, the resulting subcollection $C'=\{a_{i_1},\ldots,a_{i_r}\}$ of $r=\lfloor\log_{t+1}\log_{t+1}m\rfloor$ curves is well-grounded by a subcurve $\gamma'=\gamma'_1\cup\cdots\cup\gamma'_r\subset\gamma$, and by the first property we have $\gamma'_j\supset\gamma_{i_j}$ for $1\leq j\leq r$.
\end{proof}

We say that fans $C_1,\ldots,C_l$ are \emph{simultaneously grounded} (\emph{simultaneously well-grounded}) by a curve $\gamma=\gamma_1\cup\cdots\cup\gamma_m$ to emphasize that they are grounded (well-grounded) by $\gamma$ with \emph{the same} partition $\gamma=\gamma_1\cup\cdots\cup\gamma_m$.  See Figure \ref{simwellground} for a small example.

\begin{figure}[t]
\begin{center}
\includegraphics[width=150pt]{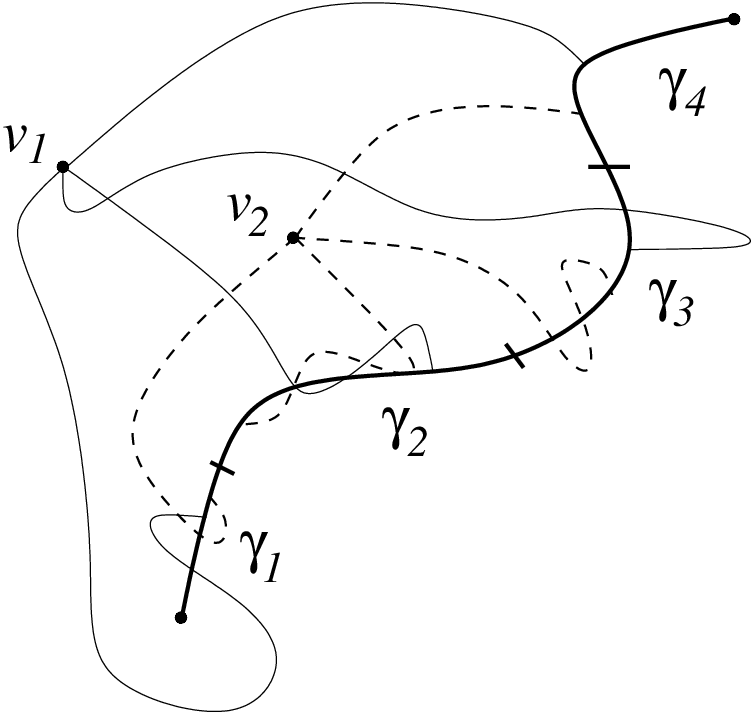}
\caption{Fans $C_1$ and $C_2$ are simultaneously well-grounded by $\gamma=\gamma_1\cup\gamma_2\cup\gamma_3\cup\gamma_4$.}
\label{simwellground}
\end{center}
\end{figure}

\begin{lemma}
\label{grounded}
Let\/ $C_1,\ldots,C_l$ be\/ $l$ fans with\/ $C_i=\{a_{i,1},\ldots,a_{i,m}\}$ that are simultaneously grounded by a curve\/ $\gamma=\gamma_1\cup\cdots\cup\gamma_m$.  If each\/ $a_{i,j}$ intersects\/ $\gamma$ in at most\/ $t$ points, then there are indices\/ $j_1<\cdots<j_r$ with\/ $r=\lfloor\log_{t+1}^{\iter{2l}}m\rfloor$ (\/$2l$-times iterated logarithm of\/ $m$) and a subcurve\/ $\gamma'=\gamma'_1\cup\cdots\cup\gamma'_r\subset\gamma$ such that
\begin{enumerate}
\item the subfans\/ $C'_i=\{a_{i,j_1},\ldots,a_{i,j_r}\}\subset C_i$ for\/ $1\leq i\leq l$ are simultaneously well-grounded by\/ $\gamma'_1\cup\cdots\cup\gamma'_r$,
\item $\gamma'_s\supset\gamma_{j_s}$ for\/ $1\leq s\leq r$.
\end{enumerate}
\end{lemma}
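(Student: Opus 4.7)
The plan is to induct on the number of fans $l$, peeling off one fan at each step by applying Lemma \ref{fan}. The base case $l=1$ is exactly Lemma \ref{fan}, which produces a well-grounded subfan of size $\lfloor\log_{t+1}\log_{t+1}m\rfloor=\lfloor\log_{t+1}^{\iter{2l}}m\rfloor$ together with the inclusions $\gamma'_s\supset\gamma_{i_s}$ demanded by property (ii).

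For the inductive step, assuming the lemma for $l-1$ fans, I would first apply Lemma \ref{fan} to the single fan $C_l$ grounded by $\gamma$ to obtain indices $k_1<\cdots<k_{m_1}$ with $m_1=\lfloor\log_{t+1}\log_{t+1}m\rfloor$ and a subcurve $\tilde\gamma=\tilde\gamma_1\cup\cdots\cup\tilde\gamma_{m_1}\subset\gamma$ with $\tilde\gamma_s\supset\gamma_{k_s}$ such that $\{a_{l,k_1},\ldots,a_{l,k_{m_1}}\}$ is well-grounded by $\tilde\gamma$. The restrictions of the remaining fans $C_1,\ldots,C_{l-1}$ to the index set $\{k_1,\ldots,k_{m_1}\}$ are automatically simultaneously grounded by $\tilde\gamma$, because each $a_{i,k_s}$ has its endpoint on $\gamma_{k_s}\subset\tilde\gamma_s$; and each of their curves still meets $\tilde\gamma\subset\gamma$ in at most $t$ points. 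Applying the inductive hypothesis to these $l-1$ fans yields indices $j_1<\cdots<j_r\subset\{k_1,\ldots,k_{m_1}\}$ with $r=\lfloor\log_{t+1}^{\iter{2(l-1)}}m_1\rfloor=\lfloor\log_{t+1}^{\iter{2l}}m\rfloor$ and a subcurve $\gamma'=\gamma'_1\cup\cdots\cup\gamma'_r\subset\tilde\gamma$ such that the subfans $C'_1,\ldots,C'_{l-1}$ are simultaneously well-grounded by $\gamma'$, and each $\gamma'_s$ contains the piece $\tilde\gamma_{s^\ast}$ that corresponds to the selected index $j_s=k_{s^\ast}$. The chain $\gamma'_s\supset\tilde\gamma_{s^\ast}\supset\gamma_{k_{s^\ast}}=\gamma_{j_s}$ then gives property (ii).

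The main, and really the only, point that will require care is checking that the further restricted fan $C'_l=\{a_{l,j_1},\ldots,a_{l,j_r}\}$ remains well-grounded by $\gamma'$ --- that is, that this property survives the later refinement performed on behalf of the other fans. I expect this preservation-under-refinement step to be the principal subtlety: once $C_l$ has been made well-grounded with respect to $\tilde\gamma$, the curve $a_{l,j_s}$ meets $\tilde\gamma$ only inside $\tilde\gamma_{s^\ast}$, and since the refined pieces $\gamma'_1,\ldots,\gamma'_r$ are interior-disjoint subcurves of $\tilde\gamma$ with $\tilde\gamma_{s^\ast}\subset\gamma'_s$, no other $\gamma'_{s'}$ can overlap the interior of $\tilde\gamma_{s^\ast}$. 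Consequently $a_{l,j_s}$ meets $\gamma'$ only inside $\gamma'_s$, which together with the well-groundedness of $C'_1,\ldots,C'_{l-1}$ furnished by the inductive hypothesis gives property (i) and closes the induction.
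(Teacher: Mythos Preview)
Your proof is correct and follows essentially the same approach as the paper: induct on $l$, use Lemma~\ref{fan} to make one fan well-grounded, then apply the induction hypothesis to the remaining $l-1$ fans on the resulting coarser partition. The only cosmetic difference is that the paper peels off $C_1$ first while you peel off $C_l$; your explicit verification that well-groundedness of the first-treated fan survives the later refinement (via $a_{l,j_s}\cap\gamma'\subset a_{l,j_s}\cap\tilde\gamma\subset\tilde\gamma_{s^\ast}\subset\gamma'_s$) is in fact more careful than the paper, which asserts this step without comment.
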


\begin{proof}
We proceed by induction on~$l$.  The base case $l=1$ follows from Lemma~\ref{fan}.  Now, assume the statement holds up to $l-1$.  We apply Lemma \ref{fan} to the fan $C_1=\{a_{1,1},\ldots,a_{1,m}\}$ and the curve $\gamma=\gamma_1\cup\cdots\cup\gamma_m$, to obtain a subfan $C^{\ast}_1=\{a_{1,w_1},\ldots,a_{1,w_s}\}\subset C_1$ with $w_1<\cdots<w_s$ and $s=\lfloor\log_{t+1}\log_{t+1}m\rfloor$ that is well-grounded by a subcurve $\gamma^{\ast}=\gamma^{\ast}_1\cup\cdots\cup\gamma^{\ast}_s\subset\gamma$ and satisfies $\gamma^{\ast}_i\supset\gamma_{w_i}$ for $1\leq i\leq s$.  For $2\leq i\leq l$, let $C^{\ast}_i=\{a_{i,w_1},\ldots,a_{i,w_s}\}\subset C_i$.  Now, we apply the induction hypothesis on the collection of $l-1$ fans $C^{\ast}_2,\ldots,C^{\ast}_l$ that are simultaneously grounded by the curve $\gamma^{\ast}=\gamma^{\ast}_1\cup\cdots\cup\gamma^{\ast}_s$.  Hence, we obtain indices $j_1<\cdots<j_r$ with $r=\lfloor\log_{t+1}^{\iter{2l-2}}s\rfloor=\lfloor\log_{t+1}^{\iter{2l}}m\rfloor$ and a subcurve $\gamma'=\gamma'_1\cup\cdots\cup\gamma'_r\subset\gamma^{\ast}$ such that each subfan $C'_i=\{a_{i,j_1},\ldots,a_{i,j_r}\}\subset C_i$ with $2\leq i\leq l$ is well-grounded by $\gamma'_1\cup\cdots\cup\gamma'_r$, and moreover $\gamma'_z\supset\gamma^{\ast}_z\supset\gamma_z$ for $1\leq z\leq r$.  By setting $C'_1=\{a_{1,j_1},\ldots,a_{1,j_r}\}\subset C^{\ast}_1$, the fans $C'_1,\ldots,C'_l$ are simultaneously well-grounded by the subcurve $\gamma'=\gamma'_1\cup\cdots\cup\gamma'_r\subset\gamma$.
\end{proof}

Let $C=\{a_1,\ldots,a_m\}$ be a fan with apex $v$ grounded by a curve $\gamma=\gamma_1\cup\cdots\cup\gamma_m$ with endpoints $p$ and~$q$.  We say that $a_i$ is \emph{left-sided} (\emph{right-sided}) if moving along $a_i$ from $v$ until we reach $\gamma$ for the first time, and then turning left (right) onto the curve $\gamma$, we reach the endpoint~$q$.  We say that $C_i$ is \emph{one-sided}, if the curves in $C_i$ are either all left-sided or all right-sided.

\begin{lemma}
\label{finlem}
Let\/ $C_1,\ldots,C_l$ be\/ $l$ fans with\/ $C_i=\{a_{i,1},\ldots,a_{i,m}\}$ that are simultaneously grounded by a curve\/~$\gamma$.  Then there are indices\/ $j_1<\cdots<j_r$ with\/ $r=\lceil m/2^l\rceil$ such that the subfans\/ $C'_i=\{a_{i,j_1},\ldots,a_{i,j_r}\}\subset C_i$ for\/ $1\leq i\leq l$ are one-sided.
\end{lemma}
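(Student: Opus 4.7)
The plan is to carry out a straightforward pigeonhole argument on column indices. By the definition preceding the lemma, each individual curve $a_{i,j}$ in the given simultaneously grounded configuration is classified as either \emph{left-sided} or \emph{right-sided} (according to whether turning left or right from $v$ onto $\gamma$ at the first meeting point leads to $q$ versus $p$). In particular, for every fixed $j\in\{1,\ldots,m\}$, the $l$ curves $a_{1,j},\ldots,a_{l,j}$ produce a ``sidedness vector'' in $\{L,R\}^l$.

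First I would verify that the classification is well-defined: since $a_{i,j}$ is grounded by $\gamma$, it meets $\gamma$ at least once at its non-apex endpoint, so the ``first meeting point'' exists, and turning left versus right at a proper crossing (tangencies being forbidden throughout Section~\ref{intpat}) is an unambiguous local operation whose outcome determines which endpoint of $\gamma$ one reaches. Thus each $a_{i,j}$ falls into exactly one of the two categories.

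Next, I would color each index $j\in\{1,\ldots,m\}$ by its sidedness vector $\sigma(j)=(\sigma_1(j),\ldots,\sigma_l(j))\in\{L,R\}^l$, where $\sigma_i(j)$ is the sidedness of $a_{i,j}$. There are at most $2^l$ distinct colors, so by the pigeonhole principle some color class has size at least $\lceil m/2^l\rceil$. Letting $j_1<\cdots<j_r$ enumerate this color class gives $r\geq\lceil m/2^l\rceil$ indices on which the entire vector $\sigma(j)$ is constant. Consequently, for each fixed $i$, all of $a_{i,j_1},\ldots,a_{i,j_r}$ share the same sidedness, which is precisely the definition of $C'_i=\{a_{i,j_1},\ldots,a_{i,j_r\}}$ being one-sided.

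I do not anticipate a serious obstacle: the content of the lemma really is just the pigeonhole application, and the only point worth a line in the write-up is the well-definedness of left-sided/right-sided, which is automatic from the blanket no-tangency assumption and the grounded condition.
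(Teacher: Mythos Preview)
Your proof is correct and is essentially the same as the paper's: the paper phrases the pigeonhole as an induction on $l$, halving the index set one fan at a time, whereas you apply pigeonhole directly to the full sidedness vector in $\{L,R\}^l$. The mathematical content is identical, and your direct formulation is arguably cleaner.
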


\begin{proof}
We proceed by induction on~$l$.  The base case $l=1$ is trivial since at least half of the curves in $C_1=\{a_{1,1},\ldots,a_{1,m}\}$ form a one-sided subset.  For the inductive step, assume that the statement holds up to $l-1$.  Let $C^{\ast}_1=\{a_{1,w_1},\ldots,a_{1,w_{\lceil m/2\rceil}}\}$ with $w_1<\cdots<w_{\lceil m/2\rceil}$ be a subset of $\lceil m/2\rceil$ curves that is one-sided.  For $i\geq 2$, set $C^{\ast}_i=\{a_{i,w_1},\ldots,a_{i,w_{\lceil m/2\rceil}}\}$.  Then apply the induction hypothesis on the $l-1$ fans $C^{\ast}_2,\ldots,C^{\ast}_l$, to obtain indices $j_1<\cdots<j_r$ with $r=\lceil\lceil m/2\rceil/2^{l-1}\rceil=\lceil m/2^l\rceil$ such that the subfans $C'_i=\{a_{i,j_1},\ldots,a_{i,j_r}\}\subset C^{\ast}_i$ for $2\leq i\leq l$ are one-sided.  By setting $C'_1=\{a_{1,j_1},\ldots,a_{1,j_r}\}\subset C^{\ast}_1$, the subfans $C'_1,\ldots,C'_l$ have the desired properties.
\end{proof}

Since at least half of the fans obtained from Lemma \ref{finlem} are either left-sided or right-sided, we have the following corollary.

\begin{corollary}
\label{leftright}
Let\/ $C_1,\ldots,C_{2l}$ be\/ $2l$ fans with\/ $C_i=\{a_{i,1},\ldots,a_{i,m}\}$ that are simultaneously grounded by a curve\/~$\gamma$.  Then there are indices\/ $i_1<\cdots<i_l$ and\/ $j_1<\cdots<j_r$ with\/ $r=\lceil m/2^{2l}\rceil$ such that the subfans\/ $C'_{i_w}=\{a_{i_w,j_1},\ldots,a_{i_w,j_r}\}\subset C_{i_w}$ for\/ $1\leq w\leq l$ are all left-sided or all right-sided.
\end{corollary}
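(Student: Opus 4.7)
The plan is to apply Lemma \ref{finlem} directly and then invoke pigeonhole on the orientations (left-sided vs.\ right-sided) of the resulting one-sided subfans.

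First I would invoke Lemma \ref{finlem} on the $2l$ simultaneously grounded fans $C_1,\ldots,C_{2l}$. This produces indices $j_1<\cdots<j_r$ with $r=\lceil m/2^{2l}\rceil$ such that each of the $2l$ subfans $\widetilde{C}_i=\{a_{i,j_1},\ldots,a_{i,j_r}\}\subset C_i$ is one-sided. Note that the bound on $r$ supplied by Lemma \ref{finlem} is already exactly $\lceil m/2^{2l}\rceil$, so no further refinement of the column indices is needed.

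Next, I would observe that by the definition of \emph{one-sided}, each of the $2l$ subfans $\widetilde{C}_i$ is either entirely left-sided or entirely right-sided. By the pigeonhole principle, at least $\lceil 2l/2\rceil=l$ of these subfans share the same orientation. I would then select any such set of $l$ indices, label them $i_1<\cdots<i_l$ in increasing order, and set $C'_{i_w}=\widetilde{C}_{i_w}$ for $1\leq w\leq l$. Since $\widetilde{C}_{i_w}$ is already indexed by the common set $j_1<\cdots<j_r$, the resulting collection $C'_{i_1},\ldots,C'_{i_l}$ satisfies the conclusion of the corollary: each uses the same column indices $j_1,\ldots,j_r$, and all are simultaneously either left-sided or right-sided.

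There is essentially no obstacle here; the corollary is an immediate packaging of Lemma \ref{finlem} with a one-line pigeonhole step, and the numerical bounds match exactly ($r$ is preserved verbatim, and passing from $2l$ to $l$ same-oriented fans is precisely what halving the count buys). The only point one must be careful about is that the same set of column indices $j_1<\cdots<j_r$ is shared across all the selected fans, which is automatic because Lemma \ref{finlem} hands us a single such set of indices rather than a separate set for each fan.
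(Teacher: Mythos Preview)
Your proposal is correct and is exactly the paper's argument: apply Lemma~\ref{finlem} to the $2l$ fans to get common column indices $j_1<\cdots<j_r$ with $r=\lceil m/2^{2l}\rceil$ making each subfan one-sided, then use pigeonhole to select $l$ of them with the same orientation.
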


By combining Lemma \ref{grounded} and Corollary \ref{leftright}, we easily obtain the following lemma which will be used in Section~\ref{pftint}.

\begin{lemma}
\label{use}
Let\/ $C_1,\ldots,C_{2l}$ be\/ $2l$ fans with\/ $C_i=\{a_{i,1},\ldots,a_{i,m}\}$ that are simultaneously grounded by a curve\/ $\gamma=\gamma_1\cup\cdots\cup\gamma_m$.  If each\/ $a_{i,j}$ intersects\/ $\gamma$ in at most\/ $t$ points, then there are indices\/ $i_1<\cdots<i_l$ and\/ $j_1<\cdots<j_r$ with\/ $r=\lceil\lfloor\log_{t+1}^{\iter{4l}}m\rfloor/2^{2l}\rceil$ and a subcurve\/ $\gamma^{\ast}=\gamma^{\ast}_1\cup\cdots\cup\gamma^{\ast}_r\subset\gamma$ such that
\begin{enumerate}
\item the subfans\/ $C'_{i_w}=\{a_{i_w,j_1},\ldots,a_{i_w,j_r}\}\subset C_{i_w}$ for\/ $1\leq w\leq l$ are simultaneously well-grounded by\/ $\gamma^{\ast}_1\cup\cdots\cup\gamma^{\ast}_r$,
\item $\gamma^{\ast}_s\supset\gamma_{j_s}$ for\/ $1\leq s\leq r$,
\item the subfans\/ $C'_{i_1},\ldots,C'_{i_l}$ are all left-sided or all right-sided.
\end{enumerate}
\end{lemma}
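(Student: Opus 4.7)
The plan is to apply our two previous structural results in sequence and then consolidate the resulting subcurve partitions. First, I would invoke Lemma \ref{grounded} on the input fans $C_1,\ldots,C_{2l}$ and the curve $\gamma=\gamma_1\cup\cdots\cup\gamma_m$. With $2l$ fans, the lemma yields indices $k_1<\cdots<k_s$, where $s=\lfloor\log_{t+1}^{\iter{4l}}m\rfloor$, and a subcurve $\gamma'=\gamma'_1\cup\cdots\cup\gamma'_s\subset\gamma$, such that the restricted fans $\widetilde{C}_i=\{a_{i,k_1},\ldots,a_{i,k_s}\}$ are simultaneously well-grounded by $\gamma'$ and $\gamma'_u\supset\gamma_{k_u}$ for every $u$.

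Next, I would apply Corollary \ref{leftright} to the $2l$ fans $\widetilde{C}_1,\ldots,\widetilde{C}_{2l}$, which are a fortiori simultaneously grounded by $\gamma'$. The corollary produces indices $i_1<\cdots<i_l$ and $u_1<\cdots<u_r$ with $r=\lceil s/2^{2l}\rceil=\lceil\lfloor\log_{t+1}^{\iter{4l}}m\rfloor/2^{2l}\rceil$, such that the subfans $C'_{i_w}=\{a_{i_w,k_{u_1}},\ldots,a_{i_w,k_{u_r}}\}$ for $1\le w\le l$ are all left-sided or all right-sided. Setting $j_s=k_{u_s}$ yields the desired index sequence.

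To produce the subcurve $\gamma^{\ast}$, I would merge the pieces of $\gamma'$ that lie between consecutively selected indices: set $\gamma^{\ast}_s=\gamma'_{u_s}\cup\gamma'_{u_s+1}\cup\cdots\cup\gamma'_{u_{s+1}-1}$ for $1\le s\le r-1$ and $\gamma^{\ast}_r=\gamma'_{u_r}$, so that $\gamma^{\ast}=\gamma^{\ast}_1\cup\cdots\cup\gamma^{\ast}_r$ is the contiguous subcurve of $\gamma'$ running from the initial endpoint of $\gamma'_{u_1}$ to the terminal endpoint of $\gamma'_{u_r}$. Condition (ii) is then immediate from $\gamma^{\ast}_s\supset\gamma'_{u_s}\supset\gamma_{k_{u_s}}=\gamma_{j_s}$. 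For (i), the well-groundedness of $\widetilde{C}_i$ by $\gamma'$ says that $a_{i_w,j_s}$ meets $\gamma'$ only inside $\gamma'_{u_s}$; since $\gamma^{\ast}\subset\gamma'$ and $\gamma'_{u_s}\subset\gamma^{\ast}_s$, the curve $a_{i_w,j_s}$ therefore meets $\gamma^{\ast}$ only inside $\gamma^{\ast}_s$.

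The one subtlety, rather than a genuine obstacle, is to confirm that the left-/right-sidedness delivered by Corollary \ref{leftright}, which is formally defined with respect to $\gamma'$, translates into sidedness with respect to $\gamma^{\ast}$ as required in the conclusion. This is immediate from the constructions: each $a_{i_w,j_s}$ is well-grounded, so its first crossing with $\gamma'$ lies in $\gamma'_{u_s}\subset\gamma^{\ast}$ and is in particular its first crossing with $\gamma^{\ast}$, and the turning direction that reaches one endpoint of $\gamma^{\ast}$ agrees with the turning direction that reaches the corresponding endpoint of $\gamma'$, because $\gamma^{\ast}$ inherits its orientation from $\gamma'$. Thus all three listed properties hold, and Corollary \ref{leftright} simply transports through to the enlarged partition.
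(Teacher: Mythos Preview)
Your proposal is correct and follows exactly the approach the paper indicates: it simply states that the lemma is obtained ``by combining Lemma~\ref{grounded} and Corollary~\ref{leftright},'' without spelling out the details. Your write-up supplies those details faithfully, including the merging of the pieces of $\gamma'$ into the coarser partition $\gamma^{\ast}$ and the observation that well-groundedness and one-sidedness with respect to $\gamma'$ transfer to $\gamma^{\ast}$; the only cosmetic issue is that you use the letter $s$ both for the intermediate length $\lfloor\log_{t+1}^{(4l)}m\rfloor$ and as a running index, which is worth disambiguating.
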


\section{Proof of Theorem \ref{tint}}\label{pftint}

By Lemma \ref{reduction} and the fact that the function $\alpha(n)$ is non-decreasing, it is enough to prove that every $k$-quasi-planar topological graph on $n$ vertices such that
\begin{itemize}
\item no two edges have more than $t$ points in common,
\item there is an edge that intersects every other edge,
\end{itemize}
has at most $2^{\alpha(n)^c}n$ edges, where $c$ depends only on $k$ and~$t$.

Let $G$ be a $k$-quasi-planar graph on $n$ vertices with no two edges intersecting in more than $t$ points.  Let $e_0=pq$ be an edge that intersects every other edge of~$G$.  Let $V_0=V(G)\setminus\{p,q\}$ and $E_0$ be the set of edges with both endpoints in~$V_0$.  Hence, we have $|E_0|>|E(G)|-2n$.  Assume without loss of generality that no two elements of $E_0$ cross $e_0$ at the same point.

It is a well-known fact (see e.g.\ \cite[Theorem 2.2.1]{AlS-book}) that there is a bipartition $V_0=V_1\cup V_2$ such that at least half of the edges in $E_0$ connect a vertex in $V_1$ to a vertex in~$V_2$.  Let $E_1$ be the set of these edges.  For each vertex $v_i\in V_1$, consider the graph $G_i$ whose each vertex corresponds to the subcurve $\gamma$ of an edge $e\in E_1$ such that
\begin{enumerate}
\item $e$ is incident to~$v_i$,
\item the endpoints of $\gamma\subset e$ are $v_i$ and the first intersection point in $e\cap e_0$ as moving from $v_i$ along~$e$.
\end{enumerate}
Two vertices are adjacent in $G_i$ if the corresponding subcurves cross.  Each graph $G_i$ is isomorphic to the intersection graph of a collection of curves with one endpoint on a simple closed curve $\lambda_1$ and the other endpoint on a simple closed curve $\lambda_2$ and with no other points in common with $\lambda_1$ or~$\lambda_2$.  To see this, enlarge the point $v_i$ and the curve $e_0$ a little, making them simple closed curves $\lambda_1$ and $\lambda_2$, and shorten the curves $\gamma$ appropriately, so as to preserve all crossings between them.  Since no $k$ of these curves pairwise intersect, by Lemma \ref{disjoint}, $G_i$ contains an independent set of size $\lceil|V(G_i)|/(k-1)^2\rceil$.  We keep all edges corresponding to the elements of this independent set, and discard all other edges incident to~$v_i$.  After repeating this process for all vertices in $V_1$, we are left with at least $\lceil|E_1|/(k-1)^2\rceil$ edges, forming a set~$E_2$.  We continue this process on the vertices in $V_2$ and the edges in~$E_2$.  After repeating this process for all vertices in $V_2$, we are left with at least $\lceil|E_2|/(k-1)^2\rceil$ edges, forming a set~$E'$.  Thus $|E(G)|<2(k-1)^4|E'|+2n$.  Now, for any two edges $e_1,e_2\in E'$ that share an endpoint, the subcurves $\gamma_1\subset e_1$ and $\gamma_2\subset e_2$ described above must be disjoint.

For each edge $e\in E'$, fix an arbitrary intersection point $s\in e\cap e_0$ to be the \emph{main intersection point} of $e$ and~$e_0$.  Let $e_1,\ldots,e_{|E'|}$ denote the edges in $E'$ listed in the order their main intersection points appear on $e_0$ from $p$ to $q$, and let $s_1,\ldots,s_{|E'|}$ denote these points respectively.  We label the endpoints of each $e_i$ as $p_i$ and $q_i$, as follows.  As we move along $e_0$ from $p$ to $q$ until we arrive at $s_i$, then we turn left and move along $e_i$, we finally reach $p_i$, while as we turn right at $s_i$ and move along $e_i$, we finally reach~$q_i$.  We define sequences $S_1=(p_1,\ldots,p_{|E'|})$ and $S_2=(q_1,\ldots,q_{|E'|})$.  They are sequences of length $|E'|$ over the $(n-2)$-element alphabet~$V_0$.  See Figure \ref{s1s2new} for a small example.

\begin{figure}[t]
\begin{center}
\includegraphics[width=170pt]{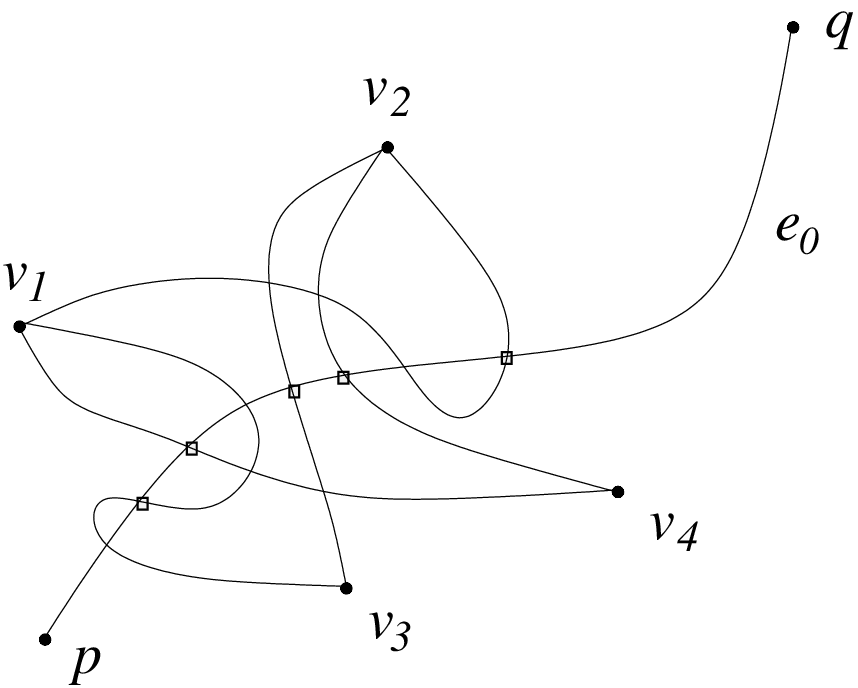}
\caption{In this example, main intersection points are indicated by squares and we have $S_1=(v_3,v_1,v_2,v_2,v_2)$ and $S_2=(v_1,v_4,v_3,v_4,v_1)$.}
\label{s1s2new}
\end{center}
\end{figure}

We will use the following lemma, due to Valtr \cite{Val98}, to find a large subsequence in either $S_1$ or $S_2$ that is $2l$-regular.  We include the proof for completeness.

\begin{lemma}[Valtr {\cite[Lemma 5]{Val98}}]
\label{regular}
For\/ $2l\geq 1$, at least one of the sequences\/ $S_1,S_2$ defined above contains a\/ $2l$-regular subsequence of length at least\/ $\lceil|E'|/(8l)\rceil$.
\end{lemma}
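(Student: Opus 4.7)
The plan is to prove Lemma~\ref{regular} by a greedy extraction that uses $S_1$ and $S_2$ simultaneously. The goal is to exhibit, in one of the two sequences, a $2l$-regular subsequence of length at least $\lceil|E'|/(8l)\rceil$.

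First, I would apply the following greedy procedure separately to each of $S_1$ and $S_2$: scan positions $i=1,2,\ldots,|E'|$ in order, maintain a buffer consisting of the $p$-values (respectively $q$-values) at the most recent $2l-1$ positions already selected, and add $i$ to the output if and only if its current value differs from every value in the buffer. By a straightforward induction on the size of the output, the resulting subsequence is $2l$-regular: each newly added element was required to differ from the preceding $2l-1$ entries of the output, so among any $2l$ consecutive output positions all values are pairwise distinct. Call the outputs $T_1$ and $T_2$.

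The next step is to lower-bound $|T_1|+|T_2|$. The structural input I would use is that distinct edges in $E'$ have distinct unordered endpoint pairs, so for any two indices $i\neq j$ one cannot have both $p_i=p_j$ and $q_i=q_j$. Consequently, for each position $i$, the number of $j\in[i+1,i+2l-1]$ satisfying $p_j=p_i$ plus the number satisfying $q_j=q_i$ is at most $2l-1$, since each such $j$ contributes to at most one of the two counts. Summing over $i$, the total number of \emph{close collisions}---pairs $i<j$ with $j-i<2l$ and matching $p$- or $q$-endpoint---in $S_1$ and $S_2$ combined is at most $(2l-1)|E'|$. Averaging, one of the two sequences has at most $(2l-1)|E'|/2$ close collisions.

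The main obstacle, and the step I expect to require the most care, is converting this close-collision bound into a length bound $|T_1|+|T_2|\geq|E'|/(4l)$, from which $\max(|T_1|,|T_2|)\geq\lceil|E'|/(8l)\rceil$ follows by averaging. Each skipped position in the greedy for $S_1$ is witnessed by some buffer element sharing its $p$-value, and each such witness can serve for only a bounded number of skips before being flushed from the buffer by $2l-1$ subsequent selections. I would therefore perform a charging argument, charging each skip to its witness and then bounding the total charge per selected position in terms of the local close-collision density, and finally combining this with the global close-collision bound above to obtain the desired inequality. The delicate point is that close collisions in the original sense and witness pairs in the greedy sense do not coincide exactly, so the amortized analysis must track the interplay between original-position windows and output-position windows of size $2l$.
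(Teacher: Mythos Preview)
Your setup is fine: the greedy extraction $R(S,2l)$ is exactly what the paper uses, and your observation that for each pair $i<j$ at most one of $p_i=p_j$, $q_i=q_j$ can hold (since the edges are distinct) is correct. The gap is the step you yourself flag as the ``main obstacle'': you never actually carry out the charging argument, and there is good reason to doubt that the close-collision count is the right quantity to make it work. A skip of position $i$ in the greedy for $S_1$ is caused by a match $p_i=p_j$ where $j$ is among the last $2l-1$ \emph{selected} positions; the original-position distance $i-j$ can be arbitrarily large, so such a skip need not be witnessed by any close collision at all. For a concrete example with $2l=4$, take $S_1=a,b,c,b,c,b,c,\ldots,b,c,a$: the greedy selects only positions $1,2,3$, the final position is skipped because it matches position $1$, yet the final position has no close collision (its three predecessors are $c,b,c$). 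More globally, your bound of $(2l-1)|E'|$ on the total number of close collisions is only a factor of two below the trivial bound $2(2l-1)|E'|$, so it carries very little information by itself; it certainly does not pin down $|T_1|+|T_2|$ without a further idea that you have not supplied.

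The paper's argument uses a different and more tractable measure: the number of \emph{distinct} symbols in an interval. From ``distinct edges'' one gets $|I(S_{1,[j_1,j_2]})|\cdot|I(S_{2,[j_1,j_2]})|\geq j_2-j_1+1$, hence by AM--GM the sum of the two distinct-symbol counts is at least $2\sqrt{j_2-j_1+1}$. The second ingredient is that when the greedy processes any block of consecutive original positions, it must accept at least (number of distinct symbols in that block) $-\,(2l-1)$ of them, since every symbol not among the last $2l-1$ already-selected terms is accepted at its first occurrence in the block. Combining these two facts in an induction over blocks of length $16l^2$ yields $|R(S_{1,[1,j]},2l)|+|R(S_{2,[1,j]},2l)|\geq j/(4l)$. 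This ``distinct symbols in a window'' count interfaces directly with the greedy (via the second ingredient), which is exactly what your close-collision count fails to do.
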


\begin{proof}
Given an integer $2l$ and a sequence $S$ (of vertices), we apply a greedy algorithm that returns a $2l$-regular subsequence $R(S,2l)$.  At the beginning of the algorithm, an auxiliary sequence $R$ is taken empty.  Then, the terms of $S$ are considered one by one from left to right, and at each step the considered term from $S$ is placed at the right end of $R$ if it does not violate the $2l$-regularity of~$R$.  Otherwise, the algorithm continues to the next term in~$S$.  Once all terms are considered in $S$, the algorithm terminates and returns a $2l$-regular subsequence $R(S,2l)=R$.  We let $|S|$ denote the length of a sequence $S$, and let $I(S)$ denote the set of vertices occurring in~$S$.

Recall that $S_1=(p_1,\ldots,p_{|E'|})$ and $S_2=(q_1,\ldots,q_{|E'|})$.  Given integers $j_1,j_2$ such that $1\leq j_1\leq j_2\leq|E'|$, we let $S_{1,[j_1,j_2]}=(p_{j_1},\ldots,p_{j_2})$ and $S_{2,[j_1,j_2]}=(q_{j_1},\ldots,q_{j_2})$.
We have
\begin{equation*}
\{e_{j_1},\ldots,e_{j_2}\} \subset \{(p_a,q_b)\colon p_a \in I(S_{1,[j_1,j_2]}),\: q_b \in I(S_{2,[j_1,j_2]})\},
\end{equation*}
which implies
\begin{equation*}
|\{e_{j_1},\ldots,e_{j_2}\}| \leq |\{(p_a,q_b)\colon p_a \in I(S_{1,[j_1,j_2]}),\: q_b \in I(S_{2,[j_1,j_2]})\}|.
\end{equation*}
Therefore,
\begin{equation*}
j_2-j_1+1 \leq |I(S_{1,[j_1,j_2]})| \cdot |I(S_{2,[j_1,j_2]})|.
\end{equation*}
By the inequality of arithmetic and geometric means, we have
\begin{equation}
\frac{|I(S_{1,[j_1,j_2]})| + |I(S_{2,[j_1,j_2]})|}{2} \geq \sqrt{j_2-j_1+1}.
\tag{$*$}\label{keyeq}
\end{equation}

We are going to prove that for each $j$ with $1\leq j\leq|E'|$, we have
\begin{equation}
|R(S_{1,[1,j]},2l)| + |R(S_{2,[1,j]},2l)| \geq \frac{j}{4l}.
\tag{$**$}\label{lemc}
\end{equation}
We proceed by induction on~$j$.  For the base cases $j\leq\min\{64l^2,|E'|\}$, by \eqref{keyeq} and $j\leq 64l^2$, we have
\begin{equation*}
|R(S_{1,[1,j]},2l)| + |R(S_{2,[1,j]},2l)| \geq |I(S_{1,[1,j]})| + |I(S_{2,[1,j]})| \geq 2\sqrt{j} \geq j/(4l).
\end{equation*}
Now, suppose that $64l^2<j_0\leq|E'|$ and \eqref{lemc} holds for $1\leq j\leq j_0-1$.  Note that for $i\in\{1,2\}$, each vertex in $I(S_{i,[j_0-16l^2+1,j_0]})$ not occurring among the last $2l-1$ terms of $R=R(S_{i,[1,j_0-16l^2]},2l)$ will eventually be added to $R$ by the greedy algorithm.  Therefore,
\begin{equation*}
|R(S_{i,[1,j_0]},2l)| \geq |R(S_{i,[1,j_0-16l^2]},2l)| + |I(S_{i,[j_0-16l^2+1,j_0]})| - (2l-1).
\end{equation*}
By the induction hypothesis and by \eqref{keyeq}, we have
\begin{equation*}
|R(S_{1,[1,j_0]},2l)| + |R(S_{2,[1,j_0]},2l)| \geq (j_0-16l^2)/(4l) + 2\sqrt{16l^2} - 2(2l-1) \geq j_0/(4l).
\end{equation*}
This completes the proof of \eqref{lemc}.  Now, Lemma \ref{regular} follows from \eqref{lemc} with $j=|E'|$ and from the pigeonhole principle.
\end{proof}

For the rest of this section, we set $l=2^{k^2+2k}$ and $m$ to be such that $(\log_{t+1}^{\iter{4l}}m)/2^{2l}=3\cdot 2^k-4$.

\begin{lemma}
\label{key}
Neither of the sequences\/ $S_1$ and\/ $S_2$ has a subsequence of type\/ $\up(2l,m)$.
\end{lemma}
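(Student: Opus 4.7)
Plan: I would proceed by contradiction, assuming WLOG (by symmetry) that $S_1$ contains a subsequence of type $\up(2l,m)$. Unpacking this, there exist $2l$ distinct vertices $v_1,\ldots,v_{2l}\in V_0$ and $2lm$ edges of $E'$ which, listed by the order of their main intersections on $e_0$, form $m$ consecutive blocks of $2l$ edges each, with the $h$-th edge of every block having $p$-endpoint $v_h$. Let $a_{h,j}$ denote the subcurve of the $h$-th edge in block $j$, running from $v_h$ to that edge's main intersection point, and partition $e_0$ into arcs $\gamma_1\cup\cdots\cup\gamma_m$ with $\gamma_j$ containing the main intersections of block $j$. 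Each $a_{h,j}$ meets $e_0$ in at most $t$ points (since any two edges of $G$ share at most $t$ points), so the $2l$ fans $C_h=\{a_{h,1},\ldots,a_{h,m}\}$ with distinct apices $v_h$ are simultaneously grounded by $\gamma=\gamma_1\cup\cdots\cup\gamma_m$ in the sense required by Lemma \ref{use}.

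Applying Lemma \ref{use} and (by symmetry) assuming the extracted subfans are all left-sided, I obtain $l$ subfans $C'_{i_1},\ldots,C'_{i_l}$ of common size $r\ge 3\cdot 2^k-4$ (by the defining property of $m$), simultaneously well-grounded by a subcurve $\gamma^{\ast}=\gamma^{\ast}_1\cup\cdots\cup\gamma^{\ast}_r\subset\gamma$, with the main intersections within each arc $\gamma^{\ast}_s$ appearing in the natural order $i_1<\cdots<i_l$ of fan indices.

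The remaining heart of the argument is to produce $k$ pairwise crossing curves among the $lr$ curves in $\bigcup_w C'_{i_w}$, contradicting the $k$-quasi-planarity of $G$. I plan to prove by induction on $h\in\{1,\ldots,k\}$ that any family of $l_h:=2^{h^2+2h}$ left-sided fans with distinct apices, simultaneously well-grounded by an ordered partition into $r_h:=3\cdot 2^h-4$ arcs, and such that within each arc the main intersections are ordered consistently by fan index, must contain $h$ pairwise crossing curves. The recurrences $r_h=2r_{h-1}+4$ and $l_h=2^{2h+1}l_{h-1}$ hold exactly and correspond to two moves in the inductive step: (i) a Ramsey/Dilworth-type selection that reduces $l_h$ fans down to $l_{h-1}$ ``compatibly arranged'' ones (where compatibility refers to the relative position of the apices around $\gamma^{\ast}$), and (ii) identifying a distinguished curve, occupying a bounded number of the $r_h$ arcs, whose forced crossings with the remaining fans can be combined with the inductive hypothesis to yield the desired $h$-clique.

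The principal obstacle is the topological content of the inductive step: arguing that under left-sidedness, well-groundedness, and the within-arc ordering, the distinguished curve is forced to cross at least one curve in each of the remaining $l_{h-1}$ fans, and that the restriction of those fans after removing the crossed curves still satisfies the inductive hypothesis. This will require a careful planar analysis in which left-sidedness fixes a consistent local orientation at every main intersection, well-groundedness confines each curve's intersections with $\gamma^{\ast}$ to its designated arc, and the strict main-intersection ordering within every arc produces the interleavings that topologically force the crossings --- an approach analogous to, but more general than (allowing up to $t$ intersections per pair of curves rather than one), Valtr's argument in \cite{Val98}.
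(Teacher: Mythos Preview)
Your setup through the application of Lemma~\ref{use} is correct and matches the paper. The gap is in the inductive step that produces $k$ pairwise crossing curves (the paper's Claim~\ref{finishoff}). Two ingredients you do not mention are essential there. First, after Lemma~\ref{use} the paper \emph{truncates}: it replaces each $a_{i_w,j_z}$ by the subcurve $a^{\ast}_{w,z}$ from the apex $v_{i_w}$ to the \emph{first} point of $a_{i_w,j_z}\cap\gamma^{\ast}$, so that the interior of every $a^{\ast}_{w,z}$ is disjoint from $\gamma^{\ast}$; well-groundedness alone only confines the intersections with $\gamma^{\ast}$ to one arc, it does not reduce them to a single endpoint. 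Second, the paper uses the construction of $E'$ (via Lemma~\ref{disjoint}) to conclude that the truncated curves within any single fan $A^{\ast}_w$ are pairwise \emph{non-crossing}. Together these two facts make $A^{\ast}_1\cup\{\gamma^{\ast}\}$ a planar arrangement that cuts the plane into exactly $r$ cells.

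The inductive step is then a pigeonhole argument, not a Ramsey/Dilworth selection, and it uses an entire fan rather than a single distinguished curve: at least $(l-1)/r$ of the other apices lie in one cell, bounded by two consecutive curves $a^{\ast}_{1,j_0}$ and $a^{\ast}_{1,j_0+1}$, and any truncated curve from such an apex landing in an arc outside $\{j_0,j_0+1\}$ is topologically forced to cross one of these two boundary curves; a further halving and the Claim~\ref{obs} dichotomy set up the recursion (this is where $r\mapsto r/2-2$ and $l\mapsto (l-1)/(2r)$ come from). Your proposed hypotheses --- left-sidedness, well-groundedness, and the consistent within-arc ordering of main intersection points --- do not by themselves yield such a region decomposition, and a \emph{single} distinguished curve cannot partition the plane into enough pieces to run the pigeonhole. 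So as written, step~(ii) of your plan does not go through; you need the truncation and the within-fan non-crossing property to make the topology tractable.
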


\begin{proof}
By symmetry, it suffices to show that $S_1$ does not contain a subsequence of type $\up(2l,m)$.  We will prove that the existence of such a subsequence would imply that $G$ has $k$ pairwise crossing edges.  Let
\begin{equation*}
S=(s_{1,1},\ldots,s_{2l,1},\;s_{1,2},\ldots,s_{2l,2},\;\ldots,\;s_{1,m},\ldots,s_{2l,m})
\end{equation*}
be a subsequence of $S_1$ of type $\up(2l,m)$ such that the first $2l$ terms are pairwise distinct and $s_{i,1}=\cdots=s_{i,m}=v_i$ for $1\leq i\leq 2l$.  For $1\leq j\leq m$, let $a_{i,j}$ be the subcurve of the edge corresponding to the entry $s_{i,j}$ in $S_1$ between the vertex $v_i$ and the main intersection point with~$e_0$.  Let $C_i=\{a_{i,1},\ldots,a_{i,m}\}$ for $1\leq i\leq 2l$.  Hence, $C_1,\ldots,C_{2l}$ are $2l$ fans with apices $v_1,\ldots,v_{2l}$ respectively.  Clearly, there is a partition $e_0=\gamma_1\cup\cdots\cup\gamma_m$ such that $C_1,\ldots,C_{2l}$ are simultaneously grounded by $\gamma_1\cup\cdots\cup\gamma_m$.

We apply Lemma \ref{use} to the fans $C_1,\ldots,C_{2l}$ that are simultaneously grounded by $\gamma_1\cup\cdots\cup\gamma_m$ to obtain indices $i_1<\cdots<i_l$ and $j_1<\cdots<j_r$ with $r=(\log_{t+1}^{\iter{4l}}m)/2^{2l}=3\cdot 2^k-4$ and a subcurve $\gamma^{\ast}=\gamma^{\ast}_1\cup\cdots\cup\gamma^{\ast}_r\subset e_0$ such that
\begin{enumerate}
\item the subfans $C'_{i_w}=\{a_{i_w,j_1},\ldots,a_{i_w,j_r}\}\subset C_{i_w}$ for $1\leq w\leq l$ are simultaneously well-grounded by $\gamma^{\ast}_1\cup\cdots\cup\gamma^{\ast}_r$,
\item $\gamma^{\ast}_z\supset\gamma_{j_z}$ for $1\leq z\leq r$,
\item the subfans $C'_{i_1},\ldots,C'_{i_l}$ are all left-sided or all right-sided.
\end{enumerate}
We will only consider the case that $C'_{i_1},\ldots,C'_{i_l}$ are left-sided, the other case being symmetric.

Now, for $1\leq w\leq l$ and $1\leq z\leq r$, we define the subcurve $a^{\ast}_{w,z}\subset a_{i_w,j_z}$ whose endpoints are $v_{i_w}$ and the first point from $a_{i_w,j_z}\cap\gamma^{\ast}$ as moving from $v_{i_w}$ along $a_{i_w,j_z}$.  Hence, the interior of $a^{\ast}_{w,z}$ is disjoint from $\gamma^{\ast}$.  Let $A^{\ast}_w=\{a^{\ast}_{w,1},\ldots,a^{\ast}_{w,r}\}$ for $1\leq w\leq l$.  Note that any two curves in $A^{\ast}_w$ do not cross by construction, and all curves in $A^{\ast}_w$ enter $\gamma^{\ast}$ from the same side.  For simplicity, we will call this the \emph{left side} of $\gamma^{\ast}$ and we will relabel the apices of the fans $A^{\ast}_1,\ldots,A^{\ast}_l$ from $v_{i_1},\ldots,v_{i_l}$ to $v_1,\ldots,v_l$.  To finally reach a contradiction, we prove the following.

\begin{claim}
\label{finishoff}
For\/ $l=2^{k^2+2k}$ and\/ $r=3\cdot2^k-4$, among the\/ $l$ fans\/ $A^{\ast}_1,\ldots,A^{\ast}_l$ with the properties above, there are\/ $k$ pairwise crossing curves.
\end{claim}

The proof follows the argument of Lemma 4.3 in \cite{FPS13}.  We proceed by induction on~$k$.  The base case $k=1$ is trivial.  For the inductive step, assume the statement holds up to $k-1$.  For simplicity, we let $a^{\ast}_{i,j}=a^{\ast}_{i,j'}$ for all $j\in\mathbb{Z}$, where $j'\in\{1,\ldots,r\}$ is such that $j\equiv j'\pmod{r}$.  Consider the fan $A^{\ast}_1$, which is of size~$r$.  By construction of $A^{\ast}_1$, the arrangement $A^{\ast}_1\cup \{\gamma^{\ast}\}$ partitions the plane into $r$ regions.  By the pigeonhole principle, there is a subset $V'\subset\{v_1,\ldots,v_l\}$ of size
\begin{equation*}
|V'|=\frac{l-1}{r}=\frac{2^{k^2+2k}-1}{3\cdot 2^k-4},
\end{equation*}
such that all the vertices in $V'$ lie in the same region.  Let $j_0\in\{1,\ldots,r\}$ be an integer such that $V'$ lies in the region bounded by $a^{\ast}_{1,j_0}$, $a^{\ast}_{1,j_0+1}$, and~$\gamma^{\ast}$.  See Figure~\ref{newbetween}.

\begin{figure}[t]
\begin{center}
\includegraphics[width=190pt]{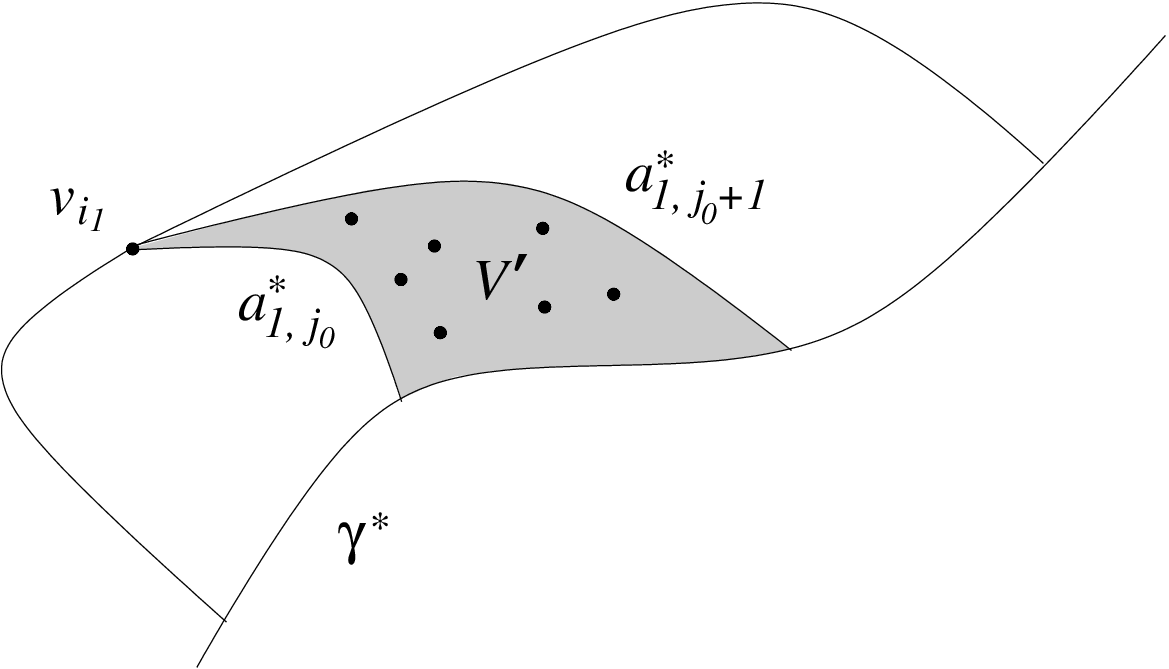}
\caption{Vertices of $V'$ lie in the region enclosed by $a^{\ast}_{1,j_0}$, $a^{\ast}_{1,j_0+1}$, and~$\gamma^{\ast}$.}
\label{newbetween}
\end{center}
\end{figure}

Let $v_i\in V'$ and $1<j_1<r$, and consider the curve $a^{\ast}_{i,j_0+j_1}$.  Recall that $a^{\ast}_{i,j_0+j_1}$ is disjoint from $\gamma^{\ast}_{j_0}\cup\gamma^{\ast}_{j_0+1}$ and thus intersects $a^{\ast}_{1,j_0}\cup a^{\ast}_{1,j_0+1}$.  Let $a\subset a^{\ast}_{i,j_0+j_1}$ be the maximal subcurve with an endpoint on $\gamma^{\ast}$ whose interior is disjoint from $a^{\ast}_{1,j_0}\cup a^{\ast}_{1,j_0+1}$.  If $a$ intersects $a^{\ast}_{1,j_0+1}$ (i.e.\ the second endpoint of $a$ lies on $a^{\ast}_{1,j_0+1}$), then $v_i$ and the left side of $\gamma^{\ast}_{j_0+2}\cup\cdots\cup\gamma^{\ast}_{j_0+j_1-1}$ lie in different connected components of $\mathbb{R}^2\setminus(a^{\ast}_{1,j_0+1}\cup\gamma^{\ast}\cup a)$.  Likewise, if $a$ intersects $a^{\ast}_{1,j_0}$, then $v_i$ and the left-side of $\gamma^{\ast}_{j_0+j_1+1}\cup\cdots\cup\gamma^{\ast}_{j_0+r-1}$ lie in different connected components of $\mathbb{R}^2\setminus(a^{\ast}_{1,j_0}\cup\gamma^{\ast}\cup a)$.

If $a$ intersects $a^{\ast}_{1,j_0+1}$, then all curves $a^{\ast}_{i,j_0+2},\ldots,a^{\ast}_{i,j_0+j_1-1}$ must also cross $a^{\ast}_{1,j_0+1}$.  Indeed, they connect $v_i$ with the left-side of $\gamma^{\ast}_{j_0+2}\cup\cdots\cup\gamma^{\ast}_{j_0+j_1-1}$, but their interiors are disjoint from $\gamma^{\ast}$ and $a^{\ast}_{i,j_0+j_1}$.  Likewise, if $a$ intersects $a^{\ast}_{1,j_0}$, then all curves $a^{\ast}_{i,j_0+j_1+1},\ldots,a^{\ast}_{i,j_0+r-1}$ must also cross $a^{\ast}_{1,j_0}$.  Therefore, we have the following.

\begin{claim}
\label{obs}
For half of the vertices\/ $v_i\in V'$, the curves emanating from\/ $v_i$ satisfy one of the following:
\begin{enumerate}
\item $a^{\ast}_{i,j_0+2},a^{\ast}_{i,j_0+3},\ldots,a^{\ast}_{i,j_0+r/2}$ all cross\/ $a^{\ast}_{1,j_0+1}$,
\item $a^{\ast}_{i,j_0+r/2+1},a^{\ast}_{i,j_0+r/2+2},\ldots,a^{\ast}_{i,j_0+r-1}$ all cross\/ $a^{\ast}_{1,j_0}$.
\end{enumerate}
\end{claim}

\noindent We keep all curves satisfying Claim \ref{obs}, and discard all other curves.  Since $r/2-2=3\cdot 2^{k-1}-4$ and
\begin{equation*}
\frac{|V'|}{2}\geq\frac{l-1}{2r}=\frac{2^{k^2+2k}-1}{6\cdot 2^k-8}\geq 2^{(k-1)^2+2(k-1)},
\end{equation*}
by Claim \ref{obs}, we can apply the induction hypothesis on these remaining curves which all cross $a^{\ast}_{1,j_0+1}$ or $a^{\ast}_{1,j_0}$.  Hence, we have found $k$ pairwise crossing edges, and this completes the proof of Claim \ref{finishoff} and thus Lemma~\ref{key}.
\end{proof}

Now, we are ready to prove Theorem~\ref{tint}.

\begin{proof}[Proof of Theorem \ref{tint}]
By Lemma \ref{regular} we know that, say, $S_1$ contains a $2l$-regular subsequence of length $\lceil|E'|/(8l)\rceil$.  By Theorem \ref{klazar} and Lemma \ref{key}, this subsequence has length at most
\begin{equation*}
n\cdot 2l\cdot 2^{(2lm-3)}\cdot(20l)^{10\alpha(n)^{2lm}}.
\end{equation*}
Therefore, we have
\begin{equation*}
\Bigl\lceil\frac{|E'|}{8l}\Bigr\rceil\leq n\cdot 2l\cdot 2^{(2lm-3)}\cdot(20l)^{10\alpha(n)^{2lm}},
\end{equation*}
which implies
\begin{equation*}
|E'|\leq 8n\cdot 2l^2\cdot 2^{(2lm-3)}\cdot(20l)^{10\alpha(n)^{2lm}}.
\end{equation*}
Since $l=2^{k^2+2k}$ and $m$ depends only on $k$ and $t$, for sufficiently large $c$ (depending only on $k$ and $t$) and $\alpha(n)\geq 2$, we have
\begin{equation*}
|E(G)|<2(k-1)^4|E'|+2n\leq 2^{\alpha(n)^c}n,
\end{equation*}
which completes the proof of Theorem~\ref{tint}.
\end{proof}

\section{Proof of Theorem \ref{simple}}\label{bart}

A family of curves in the plane is \emph{simple} if any two of them share at most one point.  A family $C$ of curves is \emph{$K_k$-free} if the intersection graph of $C$ is $K_k$-free, that is, no $k$ curves in $C$ pairwise intersect.  We let $\chi(C)$ denote the chromatic number of the intersection graph of $C$, that is, the minimum number of colors that suffice to color the curves in $C$ so that no two intersecting curves receive the same color.  A family $C$ of curves is \emph{pierced} by a line $\ell$ (a line segment $\beta$) if every curve in $C$ intersects $\ell$ ($\beta$) in exactly one point and this point is a proper crossing.

Our proof of Theorem \ref{simple} is based on the following result, proved in \cite{LMPW14} in a more general setting, for simple $K_k$-free families of compact arc-connected sets in the plane whose intersections with a line $\ell$ are non-empty segments.

\begin{theorem}[Laso\'n, Micek, Pawlik, Walczak \cite{LMPW14}]\label{thm:lason-et-al}
Every simple\/ $K_k$-free family of curves\/ $C$ pierced by a line\/ $\ell$ satisfies\/ $\chi(C)\leq a_k$, where\/ $a_k$ depends only on\/~$k$.
\end{theorem}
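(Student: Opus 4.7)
We propose a proof by induction on $k$, with trivial base case $k=2$ ($K_2$-free means pairwise disjoint, so $\chi=1$). For the inductive step, assuming the bound $a_{k-1}$ exists, we first reduce to the case of \emph{grounded} curves. Each curve $c\in C$ crosses $\ell$ at exactly one point, so split it into an upper half $c^+$ (endpoint on $\ell$, otherwise in the closed upper half-plane) and a lower half $c^-$ (analogously). Two curves of $C$ intersect iff their upper halves intersect or their lower halves intersect, so the intersection graph of $C$ is the edge-union of those of $C^+$ and $C^-$. Each of $C^+$, $C^-$ is a simple $K_k$-free family with every curve having exactly one endpoint on $\ell$; if each admits a proper coloring with $b_k$ colors, then pairing the colors gives a proper coloring of $C$ with $b_k^2$ colors.

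\textbf{Coloring a grounded family.} Let $C$ now be a simple $K_k$-free family with each curve having one endpoint on $\ell$ and otherwise lying in the open upper half-plane. Order the curves $c_1,\ldots,c_n$ by the $x$-coordinate of their grounding points on $\ell$, and for each $i$ let $B(c_i)$ denote the set of $c_j$ with $j<i$ intersecting $c_i$. The engine of the proof is the observation that $B(c_i)$, after re-grounding on a small arc of $c_i$, becomes a simple $K_{k-1}$-free grounded family: by simplicity each $c_j\in B(c_i)$ meets $c_i$ exactly once, so the initial portion of $c_j$ (from $\ell$ to its unique crossing with $c_i$) is a curve with one endpoint on a sub-arc $\ell'$ of $c_i$; any $k-1$ pairwise crossing curves in $B(c_i)$ would combine with $c_i$ to form $k$ pairwise crossing curves in $C$, contradicting the $K_k$-free hypothesis. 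By the inductive hypothesis applied along $\ell'$, $\chi(B(c_i))\leq a_{k-1}$. We then convert this into a global bound by an on-line coloring scheme: curves in $B(c_i)$ of a given color class inherit a linear order along $\ell'$, so at step $i$ we can assign $c_i$ a color from a palette of size $O(a_{k-1})$ by picking a label that conflicts with none of the at most $a_{k-1}$ color classes appearing in $B(c_i)$. This yields $a_k\leq a_{k-1}\cdot g(k)$ for a suitable function $g$.

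\textbf{Main obstacle.} The principal difficulty is the passage from a chromatic bound on each back-neighborhood to a global chromatic bound: $\chi(B(c_i))\leq a_{k-1}$ alone does not bound degeneracy or the global chromatic number, since intersection graphs of strings can have unbounded $\chi$ even when locally very tame. The fix must exploit the geometric structure of the grounding on $\ell'$ (each color class of $B(c_i)$ is essentially ordered along $\ell'$) to prove that an on-line coloring succeeds with $O(a_{k-1})$ fresh tokens per step. The other technical point is making the re-grounding rigorous: one must choose a sub-arc $\ell'$ of $c_i$ truly crossed exactly once by each $c_j\in B(c_i)$ and verify that the re-grounded family is again simple, so that the inductive hypothesis genuinely applies.
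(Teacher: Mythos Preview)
The paper does not prove this theorem; it is quoted from \cite{LMPW} and used as a black box. So there is no ``paper's own proof'' to compare against, and the question is simply whether your outline is a viable proof.

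Your reduction to grounded families (splitting into $C^+$ and $C^-$ and taking the product coloring) is fine, and the observation that each $B(c_i)$ is $K_{k-1}$-free and can be re-grounded on $c_i$ is correct and is indeed one ingredient in the actual proof. But the step you yourself flag as the ``main obstacle'' is a genuine gap, and your proposed fix does not close it. Knowing $\chi(B(c_i))\le a_{k-1}$ for every $i$ gives no control on the global chromatic number: $c_i$ may intersect arbitrarily many curves in $B(c_i)$, and those curves need not share colors in any global proper coloring, so there is no palette of size $O(a_{k-1})$ from which $c_i$ can always be colored. The sentence ``curves in $B(c_i)$ of a given color class inherit a linear order along $\ell'$'' is true but irrelevant---linear order along $\ell'$ does not stop them from all intersecting $c_i$. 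In short, the argument as written would, if it worked, show that bounded clique number implies bounded degeneracy for grounded simple families, which is false.

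The actual proofs (McGuinness for $k=3$, Suk for $y$-monotone curves, and Laso\'n--Micek--Pawlik--Walczak in general) do use induction on $k$ and do exploit that neighborhoods are $K_{k-1}$-free, but they combine this with a divide-and-conquer on the base order together with a lemma of the flavor of Lemma~\ref{lem:mcguinness}: if $\chi(C)$ is large, one finds an intersecting pair $\delta_1,\delta_2$ with a high-chromatic subfamily trapped between them, and then shows that every curve in that subfamily (or a bounded number of color classes of it) must cross $\delta_1$ or $\delta_2$, reducing to the $K_{k-1}$-free case. Supplying such a structural step---not an online-coloring argument---is what is missing from your plan.
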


\noindent The dependence of $a_k$ on $k$ in Theorem \ref{thm:lason-et-al} is double exponential.  Special cases of Theorem \ref{thm:lason-et-al} have been proved by McGuinness \cite{McG00} for $k=3$ and by Suk \cite{Suk14} for $y$-monotone curves and any~$k$.  Recently, Rok and Walczak \cite{RoW14} extended Theorem \ref{thm:lason-et-al} to arbitrary (not necessarily simple) $K_k$-free families of curves pierced by a line $\ell$, but the corresponding constant $a_k$ in their theorem is enormous (an exponential tower of size~$k$).

The following is essentially a special case of a lemma due to McGuinness \cite[Lemma 2.1]{McG96}.  We include the proof for completeness.

\begin{lemma}[McGuinness \cite{McG96}]\label{lem:mcguinness}
Let\/ $G$ be a graph, $\prec$ be a total ordering of\/ $V(G)$, and\/ $c\geq 1$.  If\/ $\chi(G)>2c$, then\/ $G$ has an edge\/ $uv$ such that the subgraph of\/ $G$ induced on the vertices strictly between\/ $u$ and\/ $v$ in the order\/ $\prec$ has chromatic number at least\/~$c$.
\end{lemma}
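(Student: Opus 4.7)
The plan is to prove the contrapositive: assume that for every edge $uv$ of $G$ the set of vertices strictly between $u$ and $v$ in $\prec$ induces a graph with chromatic number at most $c-1$, and deduce $\chi(G)\leq 2c$.

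Write $V(G)=\{v_1\prec v_2\prec\cdots\prec v_n\}$ and partition the vertices greedily into maximal consecutive intervals $I_1,I_2,\ldots,I_m$, where $I_1$ is the longest initial segment with $\chi(G[I_1])\leq c$ and each subsequent $I_j$ is the longest stretch starting right after $I_{j-1}$ with $\chi(G[I_j])\leq c$. For every $j<m$, maximality gives $\chi(G[I_j\cup\{x\}])\geq c+1$, where $x$ is the vertex immediately after $I_j$; since adjoining a single vertex can raise the chromatic number by at most one, this forces $\chi(G[I_j])=c$ exactly.

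Next I would show that no edge of $G$ joins $I_j$ to $I_k$ whenever $k\geq j+2$. If $uv$ were such an edge with $u\in I_j$ and $v\in I_k$, then all of $I_{j+1}$ would lie strictly between $u$ and $v$ in the order $\prec$; since $j+1<m$ (because $k\leq m$), the preceding paragraph gives $\chi(G[I_{j+1}])=c$, so the vertices strictly between $u$ and $v$ induce a subgraph of chromatic number at least $c$, contradicting our standing assumption. With this structural claim in hand, I would color each $I_j$ using the palette $\{1,\ldots,c\}$ when $j$ is odd and $\{c+1,\ldots,2c\}$ when $j$ is even; a proper coloring of $G[I_j]$ in the assigned $c$ colors exists because $\chi(G[I_j])\leq c$, and no conflict arises across intervals because edges only occur within a single $I_j$ or between consecutive $I_j$ and $I_{j+1}$. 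This yields $\chi(G)\leq 2c$, the desired contradiction.

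The subtle point, which I expect to be the main thing to get right, is the equality $\chi(G[I_j])=c$ for non-terminal intervals; without this sharpening of the greedy maximality, the intervening interval $I_{j+1}$ would only supply the useless bound $\chi\leq c$ instead of $\chi\geq c$, and the non-consecutive-edge argument would collapse. The last interval $I_m$ plays no role in that argument, since the intervening interval is always $I_{j+1}$ with $j+1\leq k-1\leq m-1$, so the terminal interval never needs this equality.
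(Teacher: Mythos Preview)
Your argument is correct and is essentially the paper's proof recast as a contrapositive: both greedily partition the vertex set into consecutive $\prec$-intervals of chromatic number exactly $c$ (except possibly the last), color odd- and even-indexed intervals with disjoint $c$-palettes, and observe that any edge spoiling this $2c$-coloring must jump over a full interval of chromatic number $c$. One small wording slip: you announce a contrapositive but close with ``the desired contradiction''; just say this gives $\chi(G)\le 2c$, completing the contrapositive.
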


\begin{proof}
Partition $V(G)$ into sets $V_1,\ldots,V_r$ that are pairwise disjoint intervals of the order $\prec$ so that $\chi(G[V_i])=c$ for $1\leq i<r$ and $\chi(G[V_r])\leq c$.  This can be done by adding vertices to $V_1$ from left to right in the order $\prec$ until we get $\chi(G[V_1])=c$, then following the same procedure with the remaining vertices to form $V_2$, and so on.  Color each $G[V_i]$ with $i$ odd properly with colors $\{1,\ldots,c\}$, and color each $G[V_i]$ with $i$ even properly with colors $\{c+1,\ldots,2c\}$.  If $\chi(G)>2c$, then the resulting $2c$-coloring of $G$ cannot be proper.  That is, $G$ has an edge $uv$ such that $u$ and $v$ are assigned the same color.  It follows that $u\in V_i$ and $v\in V_j$ for $i$ and $j$ distinct and of the same parity.  Therefore, at least one of the sets $V_k$ with $\chi(G[V_k])=c$ lies entirely between $u$ and $v$ in the order $\prec$, where $k$ is an index between $i$ and~$j$.
\end{proof}

Let $\ell$ be a horizontal line in the plane, and let $\beta$ be a segment of~$\ell$.  We will consider curves crossing $\beta$ at exactly one point, always assuming that this intersection point is distinct from the endpoints of~$\beta$.  Any such curve $\gamma$ is partitioned by $\beta$ into two subcurves: $\gamma^+$ that enters $\beta$ from above and $\gamma^-$ that enters $\beta$ from below, both including the intersection point of $\beta$ and~$\gamma$.

\begin{lemma}\label{lem:two-side}
Let\/ $C$ be a\/ $K_k$-free family of curves pierced by~$\beta$.  If\/ $\gamma_1^+\cap\gamma_2^+=\emptyset$ and\/ $\gamma_1^-\cap\gamma_2^-=\emptyset$ for any\/ $\gamma_1,\gamma_2\in C$, then\/ $\chi(C)\leq 2^{3k-6}$.
\end{lemma}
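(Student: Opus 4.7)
My plan is to argue by induction on $k$. The base case $k=2$ is immediate: a $K_2$-free family has no intersecting pairs, so $\chi(C)=1\leq 2^0$. For the inductive step, I assume the bound $c:=2^{3(k-1)-6}$ holds for every simple $K_{k-1}$-free family satisfying the hypothesis of the lemma, and I aim to prove $\chi(C)\leq 8c=2^{3k-6}$.

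First, I would order the curves in $C$ by the positions of their $\beta$-intersection points; under the hypothesis these positions are pairwise distinct, since a shared $\beta$-crossing of two curves $\gamma_1,\gamma_2$ would lie in both $\gamma_1^+\cap\gamma_2^+$ and $\gamma_1^-\cap\gamma_2^-$, contradicting the emptiness of both.  Assuming for contradiction that $\chi(C)>8c$, I apply Lemma~\ref{lem:mcguinness} to the intersection graph of $C$ with this total order and parameter $4c$ (noting $\chi(C)>8c=2\cdot 4c$). This produces an edge $uv$ such that the subfamily $C'\subseteq C$ of curves strictly between $u$ and $v$ in the $\beta$-order satisfies $\chi(C')\geq 4c$.

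The central topological step, and what I expect to be the principal obstacle, is to show that every curve $g\in C'$ must intersect $u$ or $v$.  I would carry this out by a lens argument: by simplicity, $u$ and $v$ cross at exactly one point, which without loss of generality lies above $\beta$.  The arcs of $u^+$ and $v^+$ from $p(u),p(v)$ up to their crossing, together with the segment of $\beta$ between $p(u)$ and $p(v)$, bound a closed region $R$ above $\beta$.  Since $p(g)$ lies on the $\beta$-boundary of $R$, the half $g^+$ emerges into $R$.  If $g^+$ does not cross $u^+$ or $v^+$, then $g^+$ is trapped inside $R$; I would then invoke the two-sided hypothesis applied to the pairs $\{g,u\}$ and $\{g,v\}$ (together with the fact that $g^-$ lies entirely below $\beta$) to rule out this trapped configuration, forcing $g$ to cross $u$ or $v$.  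Making this topological step airtight in every possible configuration is the delicate part of the argument.

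With the inclusion $C'\subseteq N(u)\cup N(v)$ in hand, I partition $C'=C'_u\cup C'_v$ by assigning each curve to one of $u,v$ that it crosses (breaking ties arbitrarily).  Each part is $K_{k-1}$-free, since a $(k-1)$-clique in $C'_u$ together with $u$ would be a $K_k$ in $C$, and similarly for $C'_v$.  Both parts inherit the two-sided hypothesis and the condition of crossing $\beta$ at one point, so by the inductive hypothesis $\chi(C'_u),\chi(C'_v)\leq c$, which gives $\chi(C')\leq 2c$.  This contradicts $\chi(C')\geq 4c$ and closes the induction, yielding $\chi(C)\leq 2^{3k-6}$.
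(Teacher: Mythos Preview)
Your overall strategy---induction on $k$, ordering by the $\beta$-crossing points, and invoking Lemma~\ref{lem:mcguinness}---matches the paper's approach.  The gap is in your ``central topological step'': the claim that \emph{every} curve $g\in C'$ must meet $u$ or $v$ is simply false, and your lens argument does not establish it.

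First, note that the very hypothesis of the lemma forbids $u^+\cap v^+\neq\emptyset$ and $u^-\cap v^-\neq\emptyset$, so the unique crossing of $u$ and $v$ lies in $u^+\cap v^-$ or $u^-\cap v^+$.  In particular there is no point where the two ``upper arcs'' $u^+$ and $v^+$ meet, so the region $R$ you describe---bounded by initial arcs of $u^+$ and $v^+$ up to ``their crossing'' together with a piece of $\beta$---is not well defined.  More importantly, even with the correct picture (the closed curve $\beta\cup\delta_1\cup\delta_2$ splitting the plane into two regions $R^+$ and $R^-$), a short curve $g$ with $p(g)$ between $p(u)$ and $p(v)$ whose two halves $g^+,g^-$ are tiny stubs may very well avoid both $u$ and $v$; nothing in the two-sided hypothesis prevents this.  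So you cannot obtain $C'\subseteq N(u)\cup N(v)$.

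What \emph{is} true---and this is what the paper proves---is the weaker statement that for any \emph{intersecting pair} $\gamma_1,\gamma_2\in C'$, at least one of them meets $u$ or $v$.  The reason is that their crossing point $p$ lies in $\gamma_1^+\cap\gamma_2^-$ or $\gamma_1^-\cap\gamma_2^+$; whichever half is on the ``wrong side'' of $\beta$ relative to the region ($R^+$ or $R^-$) containing $p$ must have crossed $u\cup v$ to get there.  With this weaker claim, let $C_1,C_2\subset C'$ be the curves meeting $u,v$ respectively; each is $K_{k-1}$-free, so $\chi(C_i)\le 2^{3k-9}$ by induction, while $C'\setminus(C_1\cup C_2)$ is independent by the claim just stated.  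Hence $\chi(C')\le 2\cdot 2^{3k-9}+1<2^{3k-7}$, contradicting $\chi(C')\ge 2^{3k-7}$.  Your numerics (applying Lemma~\ref{lem:mcguinness} with parameter $4c$ rather than $2c$) already leave room for this extra~$+1$; only the topological claim needs to be weakened.
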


\begin{proof}
We proceed by induction on~$k$.  The base case $k=2$ is trivial, as a $K_2$-free family has chromatic number~$1$.  For the induction step, assume $k\geq 3$ and the statement holds up to $k-1$.  Assume for the sake of contradiction that $\chi(C)>2^{3k-6}$.  Let $\prec$ be the ordering of $C$ according to the left-to-right order of the intersection points with~$\beta$.  Apply Lemma \ref{lem:mcguinness} with $c=2^{3k-7}$.  It follows that there are two intersecting curves $\delta_1,\delta_2\in C$ such that $\chi(C(\delta_1,\delta_2))\geq 2^{3k-7}$, where $C(\delta_1,\delta_2)=\{\gamma\in C\colon\delta_1\prec\gamma\prec\delta_2\}$.  The curves $\beta$, $\delta_1$ and $\delta_2$ together partition the plane into two regions $R^+$ and $R^-$ so that for $\gamma\in C(\delta_1,\delta_2)$, $\gamma^+$ enters $\beta$ from the side of $R^+$, while $\gamma^-$ enters $\beta$ from the side of~$R^-$.  Take any $\gamma_1,\gamma_2\in C(\delta_1,\delta_2)$ that intersect at a point~$p$.  It follows from the assumptions of the lemma that $p\in\gamma_1^+\cap\gamma_2^-$ or $p\in\gamma_1^-\cap\gamma_2^+$.  If $p\in R^+$, then one of $\gamma_1^-$, $\gamma_2^-$ (whichever contains $p$) must intersect $\delta_1$ or~$\delta_2$.  Similarly, if $p\in R^-$, then one of $\gamma_1^+$, $\gamma_2^+$ must intersect $\delta_1$ or~$\delta_2$.  In both cases, one of $\gamma_1$, $\gamma_2$ intersects $\delta_1$ or~$\delta_2$.  Let $C_1$ and $C_2$ consist of those members of $C(\delta_1,\delta_2)$ that intersect $\delta_1$ and $\delta_2$, respectively.  Clearly, both $C_1$ and $C_2$ are $K_{k-1}$-free, and thus the induction hypothesis yields $\chi(C_1)\leq 2^{3k-9}$ and $\chi(C_2)\leq 2^{3k-9}$.  Moreover, we have $\chi\bigl(C(\delta_1,\delta_2)\setminus(C_1\cup C_2)\bigr)\leq 1$, as $C(\delta_1,\delta_2)\setminus(C_1\cup C_2)$ is independent by the assumption that $\gamma_1^+\cap\gamma_2^+=\emptyset$ and $\gamma_1^-\cap\gamma_2^-=\emptyset$ for any $\gamma_1,\gamma_2\in C$.  To conclude, we have $\chi(C(\delta_1,\delta_2))\leq 2\cdot 2^{3k-9}+1<2^{3k-7}$, which is a contradiction.
\end{proof}

\begin{theorem}\label{thm:curves}
Every simple\/ $K_k$-free family of curves\/ $C$ pierced by a line segment\/ $\beta$ satisfies\/ $\chi(C)\leq b_k$, where\/ $b_k$ depends only on\/~$k$.
\end{theorem}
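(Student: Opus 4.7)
The plan is to proceed by induction on $k$; the base case $k=2$ is trivial since a $K_2$-free family has $\chi=1$. For the inductive step, set $c = 2b_{k-1}+a_k+1$ and suppose, for contradiction, that $\chi(C)>2c$. Order $C$ by the positions of the $\beta$-crossings and apply Lemma~\ref{lem:mcguinness} to obtain two intersecting curves $\delta_1,\delta_2\in C$ such that $C' := C(\delta_1,\delta_2)$ satisfies $\chi(C')\geq c$. Partition $C'$ into $C_1=\{\gamma\in C':\gamma\cap\delta_1\neq\emptyset\}$, $C_2=\{\gamma\in C'\setminus C_1:\gamma\cap\delta_2\neq\emptyset\}$, and $C_0=C'\setminus(C_1\cup C_2)$. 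Any $k-1$ pairwise intersecting members of $C_i$ ($i\in\{1,2\}$), together with $\delta_i$, would form a $K_k$ in $C$, so $C_i$ is $K_{k-1}$-free; it inherits simpleness and the property of crossing $\beta$ at exactly one point, so the inductive hypothesis gives $\chi(C_1),\chi(C_2)\leq b_{k-1}$.

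The crux is to bound $\chi(C_0)\leq a_k$ by reducing to Theorem~\ref{thm:lason-et-al}. Let $p_i=\delta_i\cap\beta$, let $q$ be the unique point of $\delta_1\cap\delta_2$ (unique by simpleness), and let $\delta_i^*$ be the subarc of $\delta_i$ from $p_i$ to the endpoint of $\delta_i$ on the side of $p_i$ away from $q$. Then $\delta_1^*$ and $\delta_2^*$ are disjoint (both avoid $q$), and the concatenation of $\delta_1^*$, the subsegment of $\beta$ between $p_1$ and $p_2$, and $\delta_2^*$ is a simple arc in the plane. Viewing the plane as $S^2$ minus the point at infinity, close this arc into a Jordan curve on $S^2$ by an arc $A$ through infinity. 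Because $\bigcup_{\gamma\in C_0}\gamma$ is bounded and no $\gamma\in C_0$ passes through an endpoint of $\delta_1$ or $\delta_2$ (which would force a forbidden intersection), $A$ can be chosen disjoint from $\bigcup_{\gamma\in C_0}\gamma$. Removing any point $P_0\in A$ off $\bigcup_{\gamma\in C_0}\gamma$ yields a simple arc $\ell'$ in the plane homeomorphic to $\mathbb{R}$. Each $\gamma\in C_0$ avoids $\delta_1\cup\delta_2\cup A$ and crosses $\beta$ once between $p_1$ and $p_2$, hence meets $\ell'$ at exactly one point. Applying Theorem~\ref{thm:lason-et-al} to $C_0$ with respect to $\ell'$ (after a homeomorphism of the plane straightening $\ell'$ into a horizontal line) yields $\chi(C_0)\leq a_k$.

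Combining, $\chi(C')\leq\chi(C_1)+\chi(C_2)+\chi(C_0)\leq 2b_{k-1}+a_k<c$, contradicting $\chi(C')\geq c$. Hence $\chi(C)\leq 2c$, completing the induction with $b_k = 4b_{k-1}+2a_k+2$. The main technical obstacle is the topological construction of $\ell'$: justifying that the arc $A$ through infinity can be chosen disjoint from every $\gamma\in C_0$ (using boundedness of the curves and the isolation of vertex endpoints), and checking that Theorem~\ref{thm:lason-et-al}, though stated for a horizontal line, is invariant under planar homeomorphism and so applies to $\ell'$.
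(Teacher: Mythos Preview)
Your inductive strategy is different from the paper's and would work, but the construction of $\ell'$ has a genuine gap. You claim that the arc $A$ from $e_1$ to $e_2$ through $\infty$ can be chosen disjoint from $\bigcup_{\gamma\in C_0}\gamma$ simply because this union is bounded and misses $e_1,e_2$. That is not sufficient: one must know that $e_1$ (and $e_2$) lies in the \emph{unbounded} component of $\mathbb{R}^2\setminus\bigcup_{\gamma\in C_0}\gamma$, and this can fail. For instance, take $k\ge 4$ and three curves $\gamma_1,\gamma_2,\gamma_3\in C_0$ that pairwise cross once at points far from $\beta$; the three ``middle'' subarcs between these crossing points form a large Jordan triangle $J'$ enclosing the (small) set $\delta_1\cup\delta_2$, while each $\gamma_i$ sends a tail from a vertex of $J'$ into the interior to pick up its single $\beta$-crossing in $(p_1,p_2)$. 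These tails can be routed to avoid $\delta_1\cup\delta_2$ and each other (they go to distinct vertices, and the unique pairwise intersection lies on the triangle, not on the tails). In this configuration $e_1,e_2$ sit strictly inside $J'\subset\bigcup C_0$, so no arc from $e_1$ to $\infty$ avoids $\bigcup C_0$, and your $A$ does not exist.

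The repair is easy and in fact simpler than your construction: instead of taking the halves $\delta_i^*$ of $\delta_i$ \emph{away} from $q$ and then trying to close up through infinity, take the halves \emph{toward} $q$. The arc of $\delta_1$ from $p_1$ to $q$, the arc of $\delta_2$ from $q$ to $p_2$, and the segment of $\beta$ from $p_2$ back to $p_1$ already form a Jordan curve $J$ in the plane (each $\delta_i$ meets $\beta$ only at $p_i$, and $\delta_1\cap\delta_2=\{q\}$). Every $\gamma\in C_0$ misses $\delta_1\cup\delta_2$ and crosses $[p_1,p_2]$ once, hence crosses $J$ exactly once. Removing any point $P_0$ on the $\delta_i$-part of $J$ and identifying $S^2\setminus\{P_0\}$ with $\mathbb{R}^2$ gives a properly embedded arc $\ell'$ to which Theorem~\ref{thm:lason-et-al} applies, yielding $\chi(C_0)\le a_k$ as you wanted. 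With this correction your induction goes through. For comparison, the paper avoids this topological step entirely: it first applies Theorem~\ref{thm:lason-et-al} to an auxiliary family $\tilde C$ (obtained by redrawing the upper and lower halves $\gamma^+,\gamma^-$ in their respective half-planes), and then handles each resulting colour class by a separate inductive lemma (Lemma~\ref{lem:two-side}); your single-induction route is a bit more direct once the Jordan curve $J$ is chosen correctly.
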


\begin{proof}
As before, assume without loss of generality that $\beta$ is a segment of a horizontal line $\ell$ and no curve in $C$ passes through the endpoints of~$\beta$.  The family $C^+=\{\gamma^+\colon\gamma\in C\}$ can be transformed into a family $\tilde C^+=\{\tilde\gamma^+\colon\gamma\in C\}$ so that
\begin{itemize}
\item $\tilde C^+$ is simple,
\item each $\tilde\gamma^+$ is entirely contained in the upper half-plane delimited by~$\ell$,
\item $\tilde\gamma_1^+$ and $\tilde\gamma_2^+$ intersect if and only if $\gamma_1^+$ and $\gamma_2^+$ intersect.
\end{itemize}
This is achieved as follows.  Consider a closed Jordan curve $\beta^{\ast}$ such that $\beta\subset\beta^{\ast}$ and the interior of $\beta^{\ast}$ is disjoint from every $\gamma^+\in C^+$.  Such a curve exists by the definition of $C^+$ and the compactness of~$\beta$.  Then, invert the plane so that $\beta^{\ast}$ becomes $\ell$ and the exterior of $\beta^{\ast}$ becomes the upper half-plane delimited by~$\ell$.  In a similar way, the family $C^-=\{\gamma^-\colon\gamma\in C\}$ can be transformed into a family $\tilde C^-=\{\tilde\gamma^-\colon\gamma\in C\}$ so that
\begin{itemize}
\item $\tilde C^-$ is simple,
\item each $\tilde\gamma^-$ is entirely contained in the lower half-plane delimited by~$\ell$,
\item $\tilde\gamma_1^-$ and $\tilde\gamma_2^-$ intersect if and only if $\gamma_1^-$ and $\gamma_2^-$ intersect.
\end{itemize}
The curves $\tilde\gamma^+$ and $\tilde\gamma^-$ are respectively the upper and lower parts of the curve $\tilde\gamma=\tilde\gamma^+\cup\tilde\gamma^-$ intersecting $\ell$ at exactly one point.  The family $\tilde C=\{\tilde\gamma\colon\gamma\in C\}$ is clearly simple and $K_k$-free.  Therefore, by Theorem \ref{thm:lason-et-al}, $\chi(\tilde C)\leq a_k$.  Fix a proper $a_k$-coloring $\phi$ of $\tilde C$ and consider the set $C_i$ consisting of those $\gamma\in C$ for which $\phi(\tilde\gamma)=i$.  It follows that $\gamma_1^+\cap\gamma_2^+=\emptyset$ and $\gamma_1^-\cap\gamma_2^-=\emptyset$ for any $\gamma_1,\gamma_2\in C_i$.  Therefore, by Lemma \ref{lem:two-side}, $\chi(C_i)\leq 2^{3k-6}$.  Summing up over all colors used by $\phi$, we obtain $\chi(C)\leq 2^{3k-6}a_k$.
\end{proof}

The same proof but using the aforementioned extension of Theorem \ref{thm:lason-et-al} to arbitrary $K_k$-free families of curves pierced by a line $\ell$, due to Rok and Walczak \cite{RoW14}, yields an extension of Theorem \ref{thm:curves} to arbitrary (not necessarily simple) $K_k$-free families of curves pierced by a line segment~$\beta$.

Now, we are ready to prove Theorem~\ref{simple}.

\begin{proof}[Proof of Theorem \ref{simple}]
By Lemma \ref{reduction}, it is enough to prove that every $k$-quasi-planar simple topological graph on $n$ vertices that contains an edge intersecting every other edge has at most $c_kn$ edges, where $c_k$ depends only on~$k$.

Let $G$ be a $k$-quasi-planar simple topological graph on $n$ vertices, and let $pq$ be an edge that intersects every other edge.  Remove all edges with an endpoint at $p$ or $q$ except the edge $pq$.  Shorten each curve representing a remaining edge by a tiny bit at both endpoints, so that curves sharing an endpoint become disjoint, while all crossings are preserved.  The resulting set of curves $C$ is simple and $K_k$-free and contains a curve $\gamma$ crossing every other curve in~$C$.  Therefore, $C\setminus\{\gamma\}$ is $K_{k-1}$-free and $|C\setminus\{\gamma\}|>|E(G)|-2n$.  Since $C$ can be transformed into an equivalent set of curves so that $\gamma$ becomes the horizontal segment $\beta$, Theorem \ref{thm:curves} yields $\chi(C\setminus\{\gamma\})\leq b_{k-1}$.  Consequently, $C\setminus\{\gamma\}$ contains an independent set $S$ of size
\begin{equation*}
|S|\geq\frac{|C\setminus\{\gamma\}|}{b_{k-1}}>\frac{|E(G)|-2n}{b_{k-1}}.
\end{equation*}
The edges of $G$ corresponding to the curves in $S$ form a planar subgraph of $G$, which implies $|S|<3n$.  The two inequalities give $|E(G)|<(3b_{k-1}+2)n$.
\end{proof}

\linespread{1.08}
\bibliographystyle{plain}
\bibliography{quasi-planar}

\end{document}